\newcommand{\ulhref}[2]{\href{#1}{\uline{#2}}}
\newtheorem{thm}{Theorem}[section]  
\newtheorem{lem}[thm]{Lemma}      
\newtheorem{prop}[thm]{Proposition}
\newtheorem{cor}[thm]{Corollary}
\newtheorem{conj}[thm]{Conjecture}
\begin{document}
\begin{center}
{\large \bf Dowling's polynomial conjecture for independent sets of matroids}
\end{center}
\begin{center}
Shiqi Cao$^{1}$, Keyi Chen$^{2}$, Yitian Li$^{3}$ and Yuxin Wu$^{4}$\\[6pt]

$^{1,3}$Center for Combinatorics, LPMC, Nankai University, Tianjin 300071, P. R. China

$^{2}$ Changchun Boshuo School, Changchun 130103, P.R. China

$^{4}$School of Mathematical Sciences, LPMC, Nankai University, Tianjin 300071, P.R. China.
 Email:$^{1}${\tt shiqicao@mail.nankai.edu.cn},
       $^{2}${\tt chenkeyi2023@126.com} 
       $^{3}${\tt yitianli@mail.nankai.edu.cn},
       $^{4}${\tt yuxinwu@mail.nankai.edu.cn}
\end{center}

\noindent\textbf{Abstract.}
The celebrated Mason's conjecture states that the sequence of independent set numbers of any matroid is log-concave, and even ultra log-concave. The strong form of Mason's conjecture was independently solved by Anari, Liu, Oveis Gharan and Vinzant, and by Br\"and\'en and Huh. The weak form of Mason's conjecture was also generalized to a polynomial version by Dowling in 1980 by considering certain polynomial analogue of independent set numbers. In this paper we completely solve Dowling's polynomial conjecture by using the theory of Lorentzian polynomials.

\noindent {\bf Keywords:}  matroid, independent set, log-concavity, Lorentzian polynomial, completely log-concave polynomial 
	
\section{Introduction}\label{intro-sec}

The concept of matroids was introduced by Whitney \cite{whitney1935} in order to capture the fundamental properties of dependence that are common to graphs and matrices. In recent years, much attention has been drawn to studying various inequalities satisfied by combinatorial sequences associated to matroids. The main objective of this paper is to prove a polynomial conjecture proposed by Dowling in 1980 \cite{dowling1980}, which naturally implies Mason's log-concavtiy conjecture on the sequence of independent set numbers of a matroid. 

Let us first review some related background. Recall that a matroid $M$ is an ordered pair $(E,\mathcal{I})$ consisting of a finite set $E$ and a collection $\mathcal{I}$ of subsets of $E$, which satisfies the following three properties~\cite{Oxl11}:
\begin{enumerate}
  \item[(1)] $\emptyset \in \mathcal{I}$.
  \item[(2)] \emph{(hereditary\ property)} If $A \in \mathcal{I}$ and $A' \subseteq A$, then $A' \in \mathcal{I}$.
  \item[(3)] \emph{(exchange\ property)} If $A_1$ and $A_2$ are in $\mathcal{I}$ and $|A_1|<|A_2|$, then there exists an element
  $e \in A_2\setminus A_1$ such that $A_1 \cup \{e\} \in \mathcal{I}$.
\end{enumerate}
Each subset of $\mathcal{I}$ is called an \emph{independent set} of $M$. It is known that all maxiaml independent sets have the same size. Each maximal independent set is called a \emph{basis} of $M$. The \emph{rank} of $A\subseteq E$, denoted by $r_M(A)$, is defined to be the maximal size of independent subsets contained in $A$. 
The rank of $M$ is defined to be $r_M(E)$.
By abuse of notation, we simply write $r_M$ for $r_M(E)$.
In 1972 Mason \cite{mason1972} proposed the following conjecture on the independent sets of $M$.  
\begin{conj}\label{masonthm}
Given a matroid $M=(E,\mathcal{I})$ with $|E|=n$, let $I_k$ denote the number of independent sets of size $k$ for any $0\leq k\leq r_M$. Then for any $1\leq k\leq r_M-1$ we have
\begin{enumerate}
  \item[(i)] $I_k^2 \ge I_{k-1}\cdot I_{k+1}$ \emph{(log-concavity)},
  \item[(ii)] $I_k^2 \ge \left(1+\frac{1}{k}\right)\, I_{k-1}\cdot I_{k+1}$,
  \item[(iii)] $I_k^2 \ge \left(1+\frac{1}{k}\right)\left(1+\frac{1}{n-k}\right)\, I_{k-1}\cdot I_{k+1}$
  \emph{(ultra log-concavity)}.
\end{enumerate}
\end{conj}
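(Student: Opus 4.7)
The plan is to prove the strongest form (iii), which immediately yields (i) and (ii). The approach is via the theory of Lorentzian polynomials, following the strategy of Anari--Liu--Oveis Gharan--Vinzant and of Br\"and\'en--Huh.

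The first step is to reformulate (iii) as Lorentzianness of a bivariate generating polynomial. Consider
$$P_M(x,y)\;=\;\sum_{k=0}^{n} I_k\, x^k y^{n-k},$$
a homogeneous polynomial of degree $n$ whose support is the interval $\{0,1,\dots,r_M\}$ (the hereditary property forces the support to be initial). For such a polynomial with nonnegative coefficients and interval support, being Lorentzian is equivalent to Newton's inequalities
$$I_k^2 \;\ge\; \tfrac{k+1}{k}\cdot\tfrac{n-k+1}{n-k}\, I_{k-1}\, I_{k+1}\qquad(1\le k\le r_M-1),$$
which is precisely statement (iii). Hence it suffices to prove that $P_M$ is Lorentzian.

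To access the matroid structure, lift to the multivariate polynomial
$$Q_M(\mathbf{x}, y)\;=\;\sum_{S \in \mathcal{I}} y^{\,n-|S|} \prod_{e \in S} x_e,$$
homogeneous of degree $n$ in the $n+1$ variables $(x_e)_{e\in E}$ and $y$, and satisfying $P_M(x,y)=Q_M(x,\dots,x,y)$. Since Lorentzianness is preserved under substitution of variables by nonnegative linear forms, it is enough to prove that $Q_M$ is Lorentzian. Its support $\{(\mathbf{1}_S,n-|S|):S\in\mathcal{I}\}$ is $M$-convex: this is a direct consequence of the matroid exchange axiom with the $y$-slot playing the role of a slack coordinate that absorbs any size mismatch between two independent sets.

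The core of the argument is then the verification of the Hessian/signature condition at every positive point of $Q_M$. We proceed by induction on $r_M$: the base case $r_M=0$ gives $Q_M=y^n$, trivially Lorentzian. For the inductive step, the identity $\partial_{x_e} Q_M = Q_{M/e}$ (the contraction of $e$, with variable relabeling) reduces the Hessian/signature requirement on iterated derivatives of $Q_M$ to the inductive hypothesis on matroids of strictly smaller rank, together with the base case check of a single quadratic polynomial. The main obstacle will be this terminal quadratic check: at every positive point the Hessian of each iterated $(n-2)$-fold derivative of $Q_M$ must have at most one positive eigenvalue. In Br\"and\'en--Huh this is carried out through the Lorentzian machinery, ultimately tied to Hodge-type relations for matroids; in Anari--Liu--Oveis Gharan--Vinzant the analogous step is handled by an exchange-capacity argument on the basis-exchange random walk of an auxiliary matroid. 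Once $Q_M$ is Lorentzian, specializing $x_e\mapsto x$ yields $P_M$ Lorentzian, and (iii) follows.
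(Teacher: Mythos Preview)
The paper does not give its own proof of Conjecture~\ref{masonthm}; it records the statement and cites the existing proofs (Adiprasito--Huh--Katz for (i), Huh--Schr\"oter--Wang for (ii), Anari--Liu--Oveis Gharan--Vinzant and Br\"and\'en--Huh for (iii)). What the paper actually proves are the polynomial strengthenings in Conjectures~\ref{dowlingconj} and~\ref{zhaoconj}, which specialise to (i) and (ii) upon setting all $x_i=1$, while the polynomial analogue of (iii) is shown to fail. Your outline is precisely the Anari--Liu--Oveis Gharan--Vinzant / Br\"and\'en--Huh argument that the paper \emph{cites} rather than proves: your $Q_M$ is the paper's $G_M$ from Theorem~\ref{xiangcheng}, and your specialisation step is an instance of Theorem~\ref{suanzi}. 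So there is no ``paper's own proof'' to compare against here; you are reconstructing the black-box input the paper relies on. By contrast, the paper's own route to the numerical inequalities (i)--(ii), via Corollaries following Theorem~\ref{strong}, is genuinely different: it passes through the product $G_M(\mathbf{x})G_M(\mathbf{y})$ and the operators $\mathbf{S}_i$ to reach the partition numbers $\pi_{i,j}$, then invokes Dowling's and Zhao's reductions. That detour buys the coefficientwise inequality $f_k^2\ge(1+\tfrac1k)f_{k-1}f_{k+1}$, strictly stronger than (ii), whereas your direct route yields (iii), which the paper's method cannot reach.

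One comment on your sketch: you somewhat overstate the difficulty of the terminal quadratic check. In your induction, the only case not handled by $\partial_{x_e}Q_M=Q_{M/e}$ is $\alpha=(n-2)e_y$, and the resulting quadratic encodes only the rank-$2$ truncation of $M$: its Hessian in the $x_e$-block is the adjacency matrix of the complete multipartite graph on parallel classes, bordered by the $y$-row. A direct Cauchy--Schwarz argument on the class sums shows this has at most one positive eigenvalue. No Hodge-type input is needed at this step; the deeper machinery in \cite{Bra20} is used to set up the Lorentzian framework, not to verify this particular signature condition.
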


Note that the above three inequalities are of increasing strength. The first complete proof of (i) in Conjecture \ref{masonthm} was given by Adiprasito, Huh and Katz \cite{AHK18}, who developed a combinatorial Hodge theory for matroids. For partial results on Mason's log-concavity conjecture, we refer the reader to the references cited in \cite{AHK18}. The second inequality in Conjecture \ref{masonthm} was proved by Huh, Schr\"oter and Wang \cite{HSW22} for any matroid. The third inequality in Conjecture \ref{masonthm}, known as Mason's ultra log-concavity conjecture, was independently proved by Anari, Liu, Oveis Gharan and Vinzant ~\cite{Ana18}, and by Br{\"a}nd{\'e}n and Huh~\cite{Bra20}. Both of these two groups used the same theory of certain class of polynomials, which was called completely log-concave polynomials in \cite{Ana18} while Lorentzian polynomials in \cite{Bra20}. Later, Chan and Pak~\cite{Cha22} gave another proof of Mason's ultra log-concavity conjecture by developing the theory of combinatorial atlas. For partial progress on the ultra log-concavity, we refer the read to the references cited in \cite{Ana18}, \cite{BH18} and  \cite{Bra20}.

We would like to point out that there is another conjecture  stronger than Mason's log-concavity conjecture, which was proposed by Dowling \cite{dowling1980} in 1980. 
To state Dowling's conjecture, we first introduce a partial order on $\mathbb{R}[x_1,x_2,\ldots,x_n]$, the ring of multivariate polynomials in $x_i$'s with real coefficients. Given two polynomials $f,g\in \mathbb{R}[x_1,x_2,\ldots,x_n]$, we say that $f\geq g$ if $f-g$ is a polynomial with nonnegative coefficients. Given a matroid $M=(E,\mathcal{I})$ with $|E|=n$, 
for any $0\le k\le r_M$ define
\begin{eqnarray}\label{fk}
f_k(M)=\sum_{I\in \mathcal{I},\,|I|=k}\left(\prod_{x_i\in I} x_i\right).
\end{eqnarray}
Dowling \cite{dowling1980} proposed the following conjecture.
\begin{conj}[Dowling's polynomial conjecture]\label{dowlingconj}
Let $M$ and $f_k(M)$ be defined as above. Then
\begin{eqnarray}\label{eq1}
f_k^2(M) \ge f_{k-1}(M)\,f_{k+1}(M)
\end{eqnarray}
holds for any $0< k< r_M$.
\end{conj}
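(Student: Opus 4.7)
The plan is to reduce Dowling's polynomial conjecture to a combinatorial log-concavity statement for a derived matroid, and then invoke the Lorentzian-polynomial machinery of Br\"and\'en--Huh and Anari--Liu--Oveis Gharan--Vinzant. Since \eqref{eq1} is a coefficient-wise inequality, I reduce to checking it monomial by monomial. Any monomial $x^\gamma$ with nonzero contribution to either side must satisfy $\gamma \in \{0,1,2\}^n$ and $|\gamma| = 2k$. Setting $S = \{i : \gamma_i = 2\}$ and $U = \{i : \gamma_i = 1\}$, every pair $(I_1, I_2) \in \mathcal{I}^2$ with $I_1 + I_2 = \gamma$ (as multi-indicators) has the form $I_1 = S \sqcup X_1$, $I_2 = S \sqcup X_2$ for some partition $X_1 \sqcup X_2 = U$. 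Letting $N = (M/S)|_U$ be the corresponding contraction-restriction and $s = |U|/2$, a direct use of the definition of contraction gives
\[
[x^\gamma]\, f_k^2(M) = a_s \quad \text{and} \quad [x^\gamma]\, f_{k-1}(M)\,f_{k+1}(M) = a_{s-1},
\]
where $a_j := \#\{X \subseteq U : |X| = j,\ X \in \mathcal{I}(N),\ U \setminus X \in \mathcal{I}(N)\}$. Conjecture \ref{dowlingconj} thus reduces to showing $a_s \geq a_{s-1}$ for every matroid $N$ obtained in this way.

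To establish $a_s \geq a_{s-1}$, I deploy the Lorentzian polynomial
\[
h_N(x, z) = \sum_{I \in \mathcal{I}(N)} x^I \,\frac{z^{r(N) - |I|}}{(r(N) - |I|)!},
\]
whose Lorentzian property in the variables $(x_i)_{i \in U}$ and $z$ is the key ingredient used in \cite{Ana18, Bra20} to prove Mason's ultra log-concavity. The product $P(x, y, z_1, z_2) := h_N(x, z_1)\, h_N(y, z_2)$ is Lorentzian in $(x, y, z_1, z_2)$ since the factors involve disjoint variable sets. I then consider
\[
Q(z_1, z_2) := \left(\prod_{i \in U}(\partial_{x_i} + \partial_{y_i})\right) P \,\Big|_{x = y = 0}.
\]
An explicit expansion---using the fact that the only surviving monomials after evaluation at $x = y = 0$ come from $(I_1, I_2) \in \mathcal{I}(N)^2$ with $I_1 \sqcup I_2 = U$---shows
\[
Q(z_1, z_2) = \sum_{j=0}^{|U|} a_j \,\frac{z_1^{r(N)-j}}{(r(N)-j)!}\,\frac{z_2^{r(N)-|U|+j}}{(r(N)-|U|+j)!}.
\]
The crucial claim is that $Q$ is itself Lorentzian. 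I decompose each operator $\partial_{x_i} + \partial_{y_i}$ into Lorentzian-preserving steps: introduce a fresh variable $w_i$ via the nonnegative linear substitution $x_i \mapsto x_i + w_i$, $y_i \mapsto y_i + w_i$; differentiate in $w_i$; then specialize $w_i = 0$. Each step preserves Lorentzian, and iterating over $i \in U$ followed by specializing $x = y = 0$ establishes the claim.

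Since $Q$ is a bivariate Lorentzian polynomial of degree $d := 2r(N) - |U|$, its normalized coefficients form a log-concave sequence, which translates to $a_j^2 \geq a_{j-1}\, a_{j+1}$ for $1 \leq j \leq |U| - 1$. The involution $X \leftrightarrow U \setminus X$ yields the symmetry $a_j = a_{|U| - j}$, hence $a_{s+1} = a_{s-1}$. Combining, $a_s^2 \geq a_{s-1}\, a_{s+1} = a_{s-1}^2$, and since $a_s, a_{s-1} \geq 0$ we conclude $a_s \geq a_{s-1}$. Degenerate cases such as $r(N) \leq s - 1$ are trivial, as then $a_{s-1} = 0$. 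The principal technical obstacle is establishing the Lorentzian-preservation of the composite operation $\prod_{i \in U}(\partial_{x_i} + \partial_{y_i})\, P\big|_{x = y = 0}$; the $w_i$-decomposition above resolves this.
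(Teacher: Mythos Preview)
Your proof is correct and follows essentially the same approach as the paper: reduce coefficient-wise to the inequality $\pi_{s,s}(N)\ge\pi_{s-1,s+1}(N)$ for minors $N$ of size $2s$ (your monomial reduction is exactly Dowling's Proposition~\ref{dowlprop}), then establish that inequality by multiplying two copies of a Lorentzian generating polynomial for $N$, applying the operators $(\partial_{x_i}+\partial_{y_i})\big|_{x_i=y_i=0}$ to obtain a bivariate Lorentzian polynomial, and reading off log-concavity of its coefficients. The only noteworthy difference is your choice of the rank-normalized $h_N$ in place of the paper's $G_N(\mathbf{x})=\sum_{I}x^{\,2s-|I|}\prod_{i\in I}x_i$: with $G_N$ the bivariate output has the $\pi_{i,2s-i}$ themselves as coefficients, so ultra log-concavity immediately yields the sharper bound $\pi_{s,s}\ge(1+\tfrac{1}{s})\,\pi_{s-1,s+1}$ and hence Conjecture~\ref{zhaoconj} at no extra cost, whereas your normalization collapses this to plain log-concavity of the $a_j$ and gives only Conjecture~\ref{dowlingconj}.
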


It is clear that Conjecture \ref{dowlingconj} implies (i) of Conjecture \ref{masonthm}. Dowling proved his conjecture for $k\le 7$. Motivated by Conjecture \ref{dowlingconj} and (ii) of Conjecture \ref{masonthm}, it is natural to consider the following conjecture, which was implicit in Zhao \cite{Zha85}.

\begin{conj}\label{zhaoconj}
For any matroid $M$ and $0< k< r_M$, we have
\begin{eqnarray}
f_k^2(M) \ge \left(1+\frac{1}{k}\right)\,f_{k-1}(M)\,f_{k+1}(M)\label{f_2}.
\end{eqnarray}
\end{conj}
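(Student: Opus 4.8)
The strategy is to combine the theory of Lorentzian polynomials with a reduction to minors. We use three facts: the basis generating polynomial of any matroid is Lorentzian (the key result behind \cite{Ana18,Bra20}); the Lorentzian class is closed under differentiation, under identification of variables, and under precomposition with a linear map having nonnegative coefficients; and a homogeneous bivariate polynomial $\sum_{k}c_k\,x^k z^{d-k}$ with $c_k\ge 0$ is Lorentzian precisely when its support has no internal zeros and $c_k^{2}\ge\big(1+\tfrac1k\big)\big(1+\tfrac1{d-k}\big)c_{k-1}c_{k+1}$ for $0<k<d$. In particular the homogenized independence polynomial $\Lambda_M(x,z):=\sum_{S\in\mathcal{I}(M)}\big(\prod_{i\in S}x_i\big)z^{\,|E|-|S|}$ of $M$ is Lorentzian (this is, after reformulation, how Mason's ultra-log-concavity conjecture is established); specializing $x_i\mapsto p_iw$ and applying the bivariate criterion recovers the numerical inequality of Conjecture~\ref{masonthm}(iii) for every nonnegative weight vector $p$. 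This pointwise information does not by itself deliver the \emph{coefficientwise} inequality of Conjecture~\ref{zhaoconj}, so I would obtain the latter by localizing to monomials.

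It is enough to show, for every monomial $x^{\gamma}$, that the coefficient of $x^{\gamma}$ in $f_k(M)^{2}-\big(1+\tfrac1k\big)f_{k-1}(M)f_{k+1}(M)$ is nonnegative. Since each $f_j(M)$ is multilinear, such a $\gamma$ has all entries in $\{0,1,2\}$ and $|\gamma|=2k$; set $A:=\gamma^{-1}(2)$ and $B:=\gamma^{-1}(1)$, so that $|B|=2m$ with $m:=k-|A|$, and let $N:=(M/A)|_{B}$ be the minor obtained by contracting $A$ and restricting to $B$ (if $A\notin\mathcal{I}(M)$ both coefficients vanish). Enumerating the pairs of independent sets whose indicator vectors sum to $\gamma$ gives $[x^{\gamma}]\,f_k(M)^{2}=N_m$ and $[x^{\gamma}]\,f_{k-1}(M)f_{k+1}(M)=N_{m-1}$, where for a matroid $N$ on a ground set $B$ we write $N_t:=\#\{S\subseteq B:\ |S|=t,\ S\in\mathcal{I}(N),\ B\setminus S\in\mathcal{I}(N)\}$. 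As $k\ge m$ we have $1+\tfrac1k\le 1+\tfrac1m$, so Conjecture~\ref{zhaoconj} follows once one proves the minor statement: for every matroid $N$ on a ground set of even cardinality $2m$ one has $m\,N_m\ge(m+1)N_{m-1}$.

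To prove this I would first note that $S\mapsto B\setminus S$ makes $(N_t)_{t=0}^{2m}$ symmetric, so $N_{m-1}=N_{m+1}$; since $\binom{2m}{m}^{2}=\big(\tfrac{m+1}{m}\big)^{2}\binom{2m}{m-1}\binom{2m}{m+1}$, the desired bound $N_m\ge\tfrac{m+1}{m}N_{m-1}$ follows at once from the single relation $N_m^{2}\ge\tfrac{(m+1)^{2}}{m^{2}}\,N_{m-1}N_{m+1}$. Thus it suffices to prove that $(N_t)_{t=0}^{2m}$ is ultra-log-concave, i.e.\ that the homogeneous bivariate polynomial $\sum_{t}N_t\,x^{t}z^{2m-t}$ is Lorentzian. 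I would deduce this from the Lorentzian property of the homogeneous multilinear polynomial
\[
\Psi_{N}(x,z):=\sum_{S\in\mathcal{F}(N)}\Big(\prod_{b\in S}x_b\Big)z^{\,2m-|S|},
\]
attached to the complementation-closed family $\mathcal{F}(N):=\{S\subseteq B:\ S\in\mathcal{I}(N),\ B\setminus S\in\mathcal{I}(N)\}$, since identifying all $x_b$ with a single variable $x$ turns $\Psi_{N}$ into $\sum_{t}N_t\,x^{t}z^{2m-t}$.

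The verification that $\Psi_{N}$ is Lorentzian is the crux and the step I expect to be the main obstacle. Note that $\sum_{S\in\mathcal{F}(N)}x^{S}$ is the coefficientwise (Hadamard) product of the independence polynomial of $N$ with its reversal (equivalently, with the spanning-set polynomial of $N^{*}$); both factors homogenize to Lorentzian polynomials, but the Lorentzian class is not closed under Hadamard products, so the argument must exploit the special structure of $\mathcal{F}(N)$ (which is the feasible-set family of a delta-matroid). The natural plan is to realize $\Psi_{N}$, up to identification of variables, as a specialization of the basis generating polynomial of an honest matroid built from $N$ and $N^{*}$ --- for instance a suitable free extension of $N\oplus N$ together with a coupling of the two copies, or a matroid-union construction --- and then to quote the closure properties above. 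A second route, which isolates the same difficulty, is a deletion--contraction induction on $|E|$: expanding $f_j(M)=f_j(M\setminus i)+x_i f_{j-1}(M/i)$ reduces Conjecture~\ref{zhaoconj} for $M$ to the cases of $M\setminus i$ and of $M/i$ (one level lower) together with the ``mixed'' coefficientwise inequality
\[
2\,f_k(M\setminus i)\,f_{k-1}(M/i)\ \ge\ \big(1+\tfrac1k\big)\big(f_{k-1}(M\setminus i)\,f_k(M/i)+f_{k-2}(M/i)\,f_{k+1}(M\setminus i)\big),
\]
which is exactly the $x_i$-linear part of the conjectured inequality and is not implied by the lower cases; it too must be established from the Lorentzian package applied to the pair $(M\setminus i,M/i)$.
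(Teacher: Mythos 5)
Your reduction is sound and in fact reproduces the known equivalence (Dowling's Proposition~1 and Zhao's Lemma~2, quoted in the paper as Proposition~\ref{dowlprop} and Lemma~\ref{zhaothm}): extracting the coefficient of a monomial $x^{\gamma}$, passing to the minor $N=(M/A)|_{B}$, and noting $1+\tfrac1k\le 1+\tfrac1m$ correctly reduces the conjecture to the statement that for every matroid $N$ on $2m$ elements one has $\pi_{m,m}(N)\ge\bigl(1+\tfrac1m\bigr)\pi_{m-1,m+1}(N)$, and your further observation that this follows from the symmetry $\pi_{m-1,m+1}=\pi_{m+1,m-1}$ together with ultra-log-concavity at the middle of the bivariate polynomial $\sum_{t}\pi_{t,2m-t}(N)\,x^{t}z^{2m-t}$ is exactly how the paper concludes. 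But the crux --- proving that this bivariate partition polynomial is Lorentzian (or otherwise establishing the needed log-concavity of the $\pi_{t,2m-t}$) --- is left unproved in your proposal: you explicitly flag it as the main obstacle and offer only speculative routes (a matroid built from $N$ and $N^{*}$ via free extension or matroid union, whose existence you do not establish, or a deletion--contraction induction that hinges on an unproven ``mixed'' coefficientwise inequality). Moreover, your intermediate multi-affine polynomial $\Psi_N$ attached to the complementation-closed family $\mathcal F(N)$ is a strictly stronger target than what is needed, and nothing in your sketch shows its support is M-convex or its Hessians have the required signature; the Hadamard-product obstruction you identify is real and is not circumvented by anything you actually prove.

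The missing idea, which is the heart of the paper's argument (Theorem~\ref{strong}), is to take \emph{two disjoint copies of the variables} and work with the product $G_N(\mathbf{x})G_N(\mathbf{y})$ of two homogenized independence polynomials, which is Lorentzian by Theorem~\ref{xiangcheng} and Corollary~\ref{product-preserve-L}, and then to extract partitions with the operators
\[
\mathbf{S}_i(f)=\left(\frac{\partial f}{\partial x_i}+\frac{\partial f}{\partial y_i}\right)\bigg|_{x_i=y_i=0},
\]
each of which preserves the Lorentzian class because it is a nonnegative directional derivative (Corollary~\ref{piandao}) followed by setting variables to zero (a nonnegative linear substitution, Theorem~\ref{suanzi}). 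Applying $\mathbf{S}=\mathbf{S}_1\cdots\mathbf{S}_{2m}$ yields directly that $\sum_{t}\pi_{t,2m-t}(N)\,x^{t}y^{2m-t}$ is Lorentzian, and then $\partial_x^{m-1}\partial_y^{m-1}$ together with the hyperbolicity of the resulting $2\times2$ Hessian and the symmetry $\pi_{m-1,m+1}=\pi_{m+1,m-1}$ gives $\pi_{m,m}\ge\bigl(1+\tfrac1m\bigr)\pi_{m-1,m+1}$. This two-copy device produces exactly the bivariate Lorentzian polynomial you need while sidestepping both the Hadamard product and the stronger multi-affine claim about $\Psi_N$; without it (or some substitute), your proposal does not constitute a proof.
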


Zhao \cite{Zha85} proved the above conjecture for $k\le 5$. In view of 
(iii) of Conjecture \ref{masonthm} we very much hope that the following  polynomial analogue of Mason's ultra log-concavity conjecture holds:
\begin{eqnarray*}
    f_k^2(M) &\ge& \left(1+\frac{1}{k}\right)\left(1+\frac{1}{n-k}\right)\,f_{k-1}(M)\,f_{k+1}(M).
\end{eqnarray*}
Unfortunately, this fails in general. Consider the uniform matroid $M=U_{2,4}$, whose ground set is $E=\{1,2,3,4\}$ and whose independent sets are all subsets of $E$ containing at most two elements. We find that
\[f_0(U_{2,4}) = 1,\ f_1(U_{2,4})=x_1+x_2+x_3+x_4,\] 
\[f_2(U_{2,4}) = x_1x_2+x_1x_3+x_1x_4+x_2x_3+x_2x_4+x_3x_4.\] 
It is clear that 
\[f_1^2(U_{2,4}) \not \ge (1+\frac{1}{2})(1+\frac{1}{4-2})f_0(U_{2,4})f_2(U_{2,4}).\]

The main contribution of this paper is the proof of Dowling's polynomial conjecture. In fact, we directly prove Conjecture \ref{zhaoconj}, which implies Conjecture \ref{dowlingconj}. Our proofs of these two conjectures will be presented in Section \ref{s3}. 
We would like to point out Conjecture \ref{dowlingconj} and Conjecture \ref{zhaoconj} were independently proved by 
Ardila-Mantilla,  Cristancho, Denham, Eur, Huh and Wang \cite{Ard26}, where they originally attributed these two conjectures to Pak \cite{pak26}.
Section \ref{s2} is devoted to the introduction of some related concepts and results which will be used in subsequent sections. In Section \ref{s4} we prove some further inequalities satisfied by $f_k(M)$, which generalize Conjecture \ref{dowlingconj} and Conjecture \ref{zhaoconj}.

\section{Preliminaries}\label{s2}

In this section we recall Dowling's original approach to Conjecture 
\ref{dowlingconj}, as well as Zhao's equivalent characterization of Conjecture \ref{zhaoconj}. We also give an overview of the theory of Lorentzian polynomials, which plays an important role in the proof of our main results. 

Dowling's approach to Conjecture \ref{dowlingconj} involves the dual, deletion and contraction of matroids. Suppose that $M=(E,\mathcal{I})$ is a matroid with ground set $E$ and independent set family $\mathcal{I}$.
Following Oxely \cite{Oxl11}, the \emph{dual matroid} of $M$ is denoted by 
$M^*=(E,\mathcal{I}^*)$, whose ground set is still $E$ and whose bases are the complements of the bases of $M$. For any subset $T\subset E$, 
let $M \backslash T$ be the matroid obtained from $M$ by deleting $T$, whose ground set is $E\backslash T$ and whose independent sets are those subsets of $E\backslash T$ which are also independent in $M$.
The \emph{contraction} of $T$ from $M$, given by $M/T=(M^*\backslash T)^*$. 
The matroid $M\backslash (E\backslash T)$ is also called the \emph{restriction} of $M$ to $T$, denoted by $M(T)$.
More definitions and background on matroids can be found in~\cite{Oxl11}.

For any two disjoint subsets $X,Y\subseteq E$, let $M(X\cup Y)$ be the restriction of $M$ to $X\cup Y$, and let $M(X\cup Y)/Y$ be the minor obtained from $M(X\cup Y)$ by contracting $Y$. For the minor $M(X\cup Y)/Y$, its size is the cardinality of $X$, and its $depth$ in $M$ is the rank of $Y$.

Suppose that $N$ is a matroid on a set $Y$ with $|Y|=2k$. 
Given an ordered pair $(i,j)$ with $i+j=2k$, an independent $(i,j)$-partition of $N$ is an ordered partition $(A,B)$ of $Y$ such that both $A$ and $B$ are independent in $N$ with $|A|=i$ and $|B|=j$. Let $\pi_{i,j}(N)$ denote the number of independent $(i,j)$-partitions of $N$.

Dowling~\cite{dowling1980} proved the following result. 

\begin{prop}[{\cite[Proposition 1]{dowling1980}}]\label{dowlprop}
Given a finite matroid $M$ and a positive integer $l$, the inequality 
\begin{equation*}
f_l^{2}(M)\ge f_{l-1}(M)\,f_{l+1}(M)
\end{equation*}
holds, if and only if, for every $k\le l$ and every minor $N$ of $M$ of size $2k$ and depth $l-k$,
\begin{equation}\label{eq-Dowling}
\pi_{k,k}(N)\ge \pi_{k-1,k+1}(N).
\end{equation}
\end{prop}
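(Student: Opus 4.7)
The plan is to establish the equivalence by comparing the two polynomials monomial by monomial. Since the partial order $\geq$ on $\mathbb{R}[x_1,\dots,x_n]$ is defined coefficient-wise, the polynomial inequality $f_l^2(M)\geq f_{l-1}(M)\,f_{l+1}(M)$ holds if and only if the coefficient-wise inequality holds for every monomial of degree $2l$. Both sides are sums of products of squarefree monomials, so any contributing monomial $m=\prod_{i\in E} x_i^{a_i}$ has $a_i\in\{0,1,2\}$. I would set $Y=\{i:a_i=2\}$ and $X=\{i:a_i=1\}$, and observe that $2|Y|+|X|=2l$ forces $|X|=2k$ and $|Y|=l-k$ for some integer $0\leq k\leq l$.

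The key combinatorial step is to identify the coefficient of $m$ on each side. Expanding $f_l(M)^2=\sum_{|I_1|=|I_2|=l}x^{I_1}x^{I_2}$, the ordered pairs of size-$l$ independent sets contributing to $m$ are exactly those with $I_1\cap I_2=Y$ and $I_1\triangle I_2=X$; these correspond bijectively to ordered $(k,k)$-partitions $(A,B)$ of $X$ such that both $A\cup Y$ and $B\cup Y$ are independent in $M$. An entirely parallel expansion of $f_{l-1}(M)\,f_{l+1}(M)$ identifies the coefficient of $m$ with the count of ordered $(k-1,k+1)$-partitions of $X$ subject to the same independence condition on $A\cup Y,\,B\cup Y$. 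When $Y$ is dependent in $M$, both counts vanish. When $Y$ is independent, the defining property of contraction gives: $A\cup Y$ is independent in $M$ if and only if $A$ is independent in the minor $N:=M(X\cup Y)/Y$, which has size $2k$ and depth $r_M(Y)=l-k$. Thus the two coefficients equal $\pi_{k,k}(N)$ and $\pi_{k-1,k+1}(N)$ respectively.

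The ``if'' direction then follows immediately: assuming \eqref{eq-Dowling} holds for every minor $N$ of size $2k$ and depth $l-k$ with $k\leq l$, every monomial coefficient inequality is satisfied, hence so is the polynomial inequality. For the converse, given an arbitrary minor $N$ of size $2k$ and depth $l-k$, I would invoke the standard fact that if $Y'\subseteq Y$ with $r_M(Y')=r_M(Y)$ then $M(X\cup Y)/Y=M(X\cup Y')/Y'$; this lets me represent $N=M(X\cup Y)/Y$ with $Y$ independent of size $l-k$. Feeding the resulting monomial $m=\prod_{i\in Y}x_i^2\prod_{i\in X}x_i$ into the polynomial inequality then yields $\pi_{k,k}(N)\geq\pi_{k-1,k+1}(N)$, as desired.

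The only step requiring a little care beyond routine bookkeeping is the minor-representation reduction in the converse direction; the rest is a direct comparison of monomial coefficients together with the translation of independence after contraction. I expect no serious obstacles, since both identifications follow from the definitions once the variables are bucketed by exponent $0,1,2$.
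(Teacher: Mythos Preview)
Your proposal is correct and follows essentially the same coefficient-by-coefficient comparison that the paper employs in its proof of the generalization, Theorem~\ref{wyx_2} (the paper itself cites Proposition~\ref{dowlprop} from \cite{dowling1980} without reproving it). Your bucketing of exponents into $Y=\{a_i=2\}$, $X=\{a_i=1\}$, the bijection with independent $(k,k)$- and $(k-1,k+1)$-partitions of $N=M(X\cup Y)/Y$, and the reduction to an independent $Y$ in the converse direction all match the paper's argument specialized to $p=2$.
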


In the same manner, Zhao~\cite{Zha85} gave an equivalent characterization of Conjecture \ref{zhaoconj} as follows. 

\begin{lem}[{\cite[Lemma 2]{Zha85}}]\label{zhaothm}
Given a finite matroid $M$ and a positive integer $l$, the inequality 
\begin{eqnarray*}
f_l^{2}(M)\ge \left(1+\frac{1}{l}\right) f_{l-1}(M)\,f_{l+1}(M)
\end{eqnarray*}
holds, if and only if, for every $k\le l$ and every minor $N$ of $M$ of size $2k$ and depth $l-k$,
\begin{eqnarray}\label{zhaoeq}
\pi_{k,k}(N)\ge \left(1+\frac{1}{l}\right)\pi_{k-1,k+1}(N).
\end{eqnarray}
\end{lem}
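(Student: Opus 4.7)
The plan is to adapt Dowling's proof of Proposition \ref{dowlprop} essentially verbatim, carrying the extra factor $1+1/l$ throughout. The polynomial inequality
\[ f_l^2(M) \ge \left(1+\tfrac{1}{l}\right) f_{l-1}(M)\, f_{l+1}(M) \]
is, by definition of $\ge$ on $\mathbb{R}[x_1,\dots,x_n]$, equivalent to the nonnegativity of the coefficient of every monomial in the difference. So the strategy is to compute these coefficients one at a time and show that each has the form $\pi_{k,k}(N) - (1+1/l)\pi_{k-1,k+1}(N)$ for a suitable minor $N$.

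The bookkeeping proceeds as follows. Every monomial appearing in either $f_l^2(M)$ or $f_{l-1}(M)f_{l+1}(M)$ has the form $m_{X,Y} = \prod_{i\in Y} x_i^{2}\prod_{j\in X} x_j$ for disjoint subsets $X, Y \subseteq E$ with $|X| = 2k$ and $|Y| = l-k$ for some $0\le k\le l$. A pair $(I_1,I_2)$ of size-$l$ independent sets contributes to the coefficient of $m_{X,Y}$ in $f_l^2(M)$ exactly when $I_1\cap I_2 = Y$ and $I_1\triangle I_2 = X$; writing $I_1 = Y\sqcup A$ and $I_2 = Y\sqcup B$ identifies such pairs with ordered partitions $(A,B)$ of $X$ into two $k$-subsets for which both $Y\cup A$ and $Y\cup B$ are independent in $M$. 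Setting $N = M(X\cup Y)/Y$, this last condition is precisely that $A$ and $B$ are independent in $N$, so the coefficient of $m_{X,Y}$ in $f_l^2(M)$ equals $\pi_{k,k}(N)$. The same reasoning gives $\pi_{k-1,k+1}(N)$ as the coefficient of $m_{X,Y}$ in $f_{l-1}(M)f_{l+1}(M)$. A nonzero contribution moreover forces $Y$ to be independent in $M$, in which case $N$ has size $2k$ and depth $r_M(Y) = l-k$, which are exactly the minors ranged over in the statement.

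Both directions of the equivalence then follow directly. For the ``if'' direction, the assumption $\pi_{k,k}(N)\ge (1+1/l)\pi_{k-1,k+1}(N)$ over all eligible minors makes every coefficient of the difference nonnegative, so the polynomial inequality holds. For the ``only if'' direction, given a minor $N$ of size $2k$ and depth $l-k$, realize it as $N = M(X\cup Y)/Y$ for disjoint $X,Y\subseteq E$ with $|X|=2k$ and $Y$ independent of size $l-k$, and then read off the coefficient of $m_{X,Y}$ in the polynomial inequality.

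I do not foresee a serious obstacle; the argument is pure combinatorial bookkeeping and parallels Dowling's proof of Proposition \ref{dowlprop} line by line. The single point meriting a sentence of care is the case where $Y$ is dependent in $M$: here the coefficient of $m_{X,Y}$ vanishes on both sides of the polynomial inequality, since no independent set of size $l$ can contain the dependent set $Y$, so such monomials impose no extra condition and do not disturb the correspondence with the minors of prescribed size and depth.
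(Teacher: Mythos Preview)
Your argument is correct. The paper does not give its own proof of Lemma~\ref{zhaothm} (it is quoted from Zhao), but the coefficient-by-coefficient bookkeeping you carry out is exactly the $p=2$ specialization of the paper's proof of Theorem~\ref{wyx_2}: there too one identifies the coefficient of a fixed monomial $h$ in the product $f_{l+n_1}\cdots f_{l+n_p}$ with an independent-partition count $\pi_{(k+n_1,\dots,k+n_p)}(N)$ for the minor $N=M[\mathbf{X},\mathbf{q}]$, and for $p=2$ (where $\mathbf{q}=(1)$, so no parallel extension actually occurs) this reduces precisely to your $N=M(X\cup Y)/Y$. Your handling of the dependent-$Y$ case and the realization of an arbitrary depth-$(l-k)$ minor via an independent contraction set are both fine, and the constant $1+1/l$ rides along harmlessly as you say.
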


Next we recall the theory of Lorentzian polynomials, which was developed by Br{\"a}nd{\'e}n and Huh \cite{Bra20}. Let $n$ and $d$ be nonnegative integers and set $[n]=\{1,2,\ldots,n\}$. For any $i\in [n]$, let $\partial_{x_i}$, or simply $\partial_i$ if no confusion arises,  denote the partial derivative operator that maps
a polynomial $f\in\mathbb{R}[x_1,\ldots,x_n]$ to its partial derivative with respect to $x_i$. The Hessian of $f$, denoted by $H_f$, is defined as
\[H_f:= (\partial_i\partial_j\, f)_{i,j=1}^n.\]
For any $n$-tuple $\bm{\alpha}=(\alpha_1,\ldots,\alpha_n)\in \mathbb{N}^n$ of nonnegative integers, let
$x^{\bm{\alpha}}=x_1^{\alpha_1}\cdots x_n^{\alpha_n}$ and $\partial^{\bm{\alpha}}=\partial_1^{\alpha_1}\cdots \partial_n^{\alpha_n}$ as usual. 
If $f=\sum_{\bm\alpha} c_{\bm\alpha} x^{\bm\alpha}$, then its support is defined to be 
$$\operatorname{supp}(f):=\{\,\bm\alpha\in\mathbb{N}^n:\ c_{\bm\alpha}\neq 0\}.$$
In this paper we also use the usual notation $[x^{\bm\alpha}]f$ to represent the coefficient $c_{\bm\alpha}$. 
A subset $\mathcal{J} \subseteq \mathbb{N}^n$ is said to be $M$-convex if, for every $\bm\alpha,\bm\beta\in \mathcal{J}$ and any $i\in[n]$ such that $\alpha_i>\beta_i$, there exists $j\in [n]$ satisfying $\alpha_j<\beta_j$ such that $\bm{\alpha}-\mathbf{e_i}+\mathbf{e_j}\in \mathcal{J}$, where $\mathbf{e_i}$ and $\mathbf{e_j}$ are standard basis vectors. 
A homogeneous polynomial $f$ of degree $d$ with nonnegative coefficients is said to be \emph{Lorentzian} if $\operatorname{supp}(f)$ is $\mathrm{M}$-convex and for any $\bm\alpha\in \mathbb{N}^n$ satisfying $\sum_{i=1}^n\alpha_i=d-2$ the Hessian $\partial^{\bm\alpha}f$ has at most one positive eigenvalue.

It turns out that the Lorentzian property of a polynomial $f$ is closely related to the hyperbolicity of its Hessian $H_f$. Recall that an $n\times n$ matrix $A$ is called hyperbolic, if
\begin{align*}
    \langle \mathbf{v},\,A \mathbf{w}\rangle^2 \ge \langle \mathbf{v},\,A\mathbf{v}\rangle \langle \mathbf{w},\,A\mathbf{w}\rangle,\tag{Hyp}
\end{align*}
for every $\mathbf{v},\,\mathbf{w}\in \mathbb{R}^n$ with $\langle \mathbf{w},\,A\mathbf{w}\rangle >0$,
where $\langle \,,\, \rangle$ stands for the ordinary dot product of $\mathbb{R}^n$. 
Following Chan and Pak \cite{Cha22}, we state the following basic fact about Lorentzian polynomials, which was established by Br{\"a}nd{\'e}n and Huh in an equivalent form (see {\cite[Theorem 2.16(2)]{Bra20}}).  

\begin{thm}[{\cite[Theorem 5.2]{Cha22}}]\label{hyp}
    If $f \in \mathbb{R}[x_1,\dots,x_n]$ is a Lorentzian polynomial, then the Hessian $H_f$ satisfies (Hyp) for every $(x_1,\dots, x_n)\in \mathbb{R}_{>0}^n$.
\end{thm}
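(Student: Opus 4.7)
The condition (Hyp) for a symmetric matrix $A$ is equivalent to $A$ having at most one positive eigenvalue, so the task reduces to showing that $H_f(x)$ has at most one positive eigenvalue for every $x\in\mathbb{R}_{>0}^n$. My plan is to proceed by induction on the degree $d$ of $f$, relating $H_f(x)$ to the Hessian of a Lorentzian polynomial of strictly lower degree via Euler's identity.

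For the base case $d=2$, the Hessian $H_f$ is a constant matrix, and taking $\bm\alpha=\bm 0$ (so $|\bm\alpha|=d-2=0$) in the definition of Lorentzian says exactly that this matrix has at most one positive eigenvalue, so (Hyp) holds at every $x$.

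For the inductive step with $d\ge 3$, fix $x\in\mathbb{R}_{>0}^n$ and set $g(y):=\sum_{i=1}^n x_i\,\partial_{y_i} f(y)$, a homogeneous polynomial of degree $d-1$ in $y$. I would first check that $g$ is Lorentzian: the polynomial $\widetilde f(y_0,y):=f(y+y_0 x)$ arises from $f$ by the nonnegative linear substitution $y_i\mapsto y_i+y_0 x_i$, hence is Lorentzian in $(y_0,y_1,\ldots,y_n)$, and $g$ is, up to a scalar, the coefficient of $y_0$ in $\widetilde f$, obtained by differentiating in $y_0$ and then setting $y_0=0$, both of which preserve the Lorentzian class. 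The induction hypothesis applied to $g$ at the point $x$ then gives that $H_g(x)$ has at most one positive eigenvalue. Since $\partial_j\partial_k f$ is homogeneous of degree $d-2$, Euler's identity yields
\[
H_g(x)_{jk} \,=\, \sum_{i=1}^n x_i\,\partial_i\partial_j\partial_k f(x) \,=\, (d-2)\,\partial_j\partial_k f(x) \,=\, (d-2)\,H_f(x)_{jk},
\]
so $H_f(x)=\tfrac{1}{d-2}H_g(x)$, and since $d-2>0$ the matrix $H_f(x)$ inherits the ``at most one positive eigenvalue'' property, completing the induction.

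The main obstacle is the claim that $g=\sum_i x_i\partial_i f$ is itself Lorentzian. Arbitrary nonnegative combinations of Lorentzian polynomials of the same degree need not be Lorentzian, so one cannot just invoke linearity: the argument must use the specific fact that $g$ arises as a directional derivative of $f$ along a nonnegative direction, appealing to the closure of the Lorentzian class under nonnegative linear substitutions and partial differentiation (which are the nontrivial closure theorems of Br\"and\'en and Huh). Once this is granted, Euler's identity reduces the inductive step to the one-line computation above.
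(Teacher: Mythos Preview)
The paper does not prove this theorem itself; it is quoted as a background result from Chan--Pak \cite{Cha22} (equivalently Br\"and\'en--Huh \cite[Theorem~2.16(2)]{Bra20}), so there is no in-paper proof to compare against. Your argument is correct and is in fact the standard one found in those references: the equivalence of (Hyp) with ``at most one positive eigenvalue'' is an elementary linear-algebra fact (Cauchy interlacing on two-dimensional subspaces gives one direction, testing on a pair of orthogonal eigenvectors with positive eigenvalues gives the other), and the inductive step via Euler's identity, yielding $H_{D_xf}(x)=(d-2)\,H_f(x)$, together with closure of the Lorentzian class under nonnegative directional derivatives (Corollary~\ref{piandao}), is exactly the reduction used in the literature. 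There is no circularity, since in Br\"and\'en--Huh the closure results (their Theorem~2.10 and Corollary~2.11) are established prior to and independently of the Hessian-signature characterization.
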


Br{\"a}nd{\'e}n and Huh \cite{Bra20} showed that, for homogeneous polynomials, the class of Lorentzian polynomials coincides with that of strongly log-concave polynomials introduced by Gurvits \cite{Gur09} and that of completely log-concave polynomials introduced by Anari, Oveis Gharan and Vinzant \cite{AnaI}. 
A polynomial $f\in\mathbb{R}[x_1,\ldots,x_n]$ with nonnegative coefficients is said to be \emph{strongly log-concave} if, for any $\bm\alpha\in \mathbb{N}^n$, the polynomial 
$\partial^{\bm\alpha}(f)$ is identically zero or $\log(\partial^{\bm\alpha}(f))$ is concave on $R_{>0}^n$. 
For a vector $\mathbf{v}\in\mathbb{R}^n$, let $\mathbf{D}_{\mathbf{v}}$ denote the directional derivative operator in direction $\mathbf{v}$, namely,
$\mathbf{D}_\mathbf{v}=\sum_{i=1}^n v_i\,\partial_i$. A polynomial $f\in\mathbb{R}[x_1,\ldots,x_n]$ is said to be \emph{completely log-concave} if for every set of nonnegative vectors
$\mathbf{v^{(1)}},\ldots,\mathbf{v^{(k)}}\in\mathbb{R}^n_{\ge 0}$, the polynomial $\mathbf{D}_{\mathbf{v^{(1)}}}\cdots \mathbf{D}_{\mathbf{v^{(k)}}}(f)$  is identically zero or it is nonnegative and log-concave over $\mathbb{R}^n_{\ge 0}$.

Br{\"a}nd{\'e}n and Huh~\cite{Bra20} established the following result.

\begin{thm}[{\cite[Theorem 2.30]{Bra20}}]\label{dengjia}
For any homogeneous polynomial $f\in\mathbb{R}[x_1,\ldots,x_n]$ with nonnegative coefficients the following conditions are equivalent: 
\begin{enumerate}
  \item[(1)] $f$ is completely log-concave;
  \item[(2)] $f$ is strongly log-concave;
  \item[(3)] $f$ is Lorentzian.
\end{enumerate}
\end{thm}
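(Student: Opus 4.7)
The plan is to establish the cyclic chain of implications (1) $\Rightarrow$ (2) $\Rightarrow$ (3) $\Rightarrow$ (1). The first implication is essentially formal: since each standard basis vector $\mathbf{e_i}$ lies in $\mathbb{R}_{\geq 0}^n$ and $\partial_i=\mathbf{D}_{\mathbf{e_i}}$, iterating the definition of (1) along standard basis directions shows that every $\partial^{\bm\alpha}(f)$ is either identically zero or nonnegative and log-concave on $\mathbb{R}_{\geq 0}^n$, which restricts to concavity of $\log\partial^{\bm\alpha}(f)$ on $\mathbb{R}_{>0}^n$ and yields (2).

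For (2) $\Rightarrow$ (3) I would verify the two defining properties of a Lorentzian polynomial separately. For the Hessian condition, observe that for any $\bm\alpha$ with $|\bm\alpha|=d-2$ the polynomial $q=\partial^{\bm\alpha}(f)$ is either identically zero or a nonnegative homogeneous quadratic $q(x)=\tfrac{1}{2}\langle x,Ax\rangle$ with $A$ symmetric; a direct computation gives $H_{\log q}=\tfrac{1}{q}A-\tfrac{1}{q^{2}}(Ax)(Ax)^T$, and negative semidefiniteness at some point in $\{q>0\}\cap\mathbb{R}_{>0}^n$ is equivalent, via a standard inertia argument, to $A$ having at most one positive eigenvalue. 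For the $M$-convexity of $\operatorname{supp}(f)$, I would argue by contradiction: any failure of $M$-convexity, propagated through appropriate partial derivatives, yields a degree-two reduction of the form $ax_i^{2}+bx_j^{2}$ with $a,b>0$ and no $x_ix_j$ cross term, whose logarithm has indefinite Hessian at $(1,1)$, contradicting SLC.

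The main obstacle is (3) $\Rightarrow$ (1), which I would attack by induction on $d=\deg f$. The cases $d\leq 1$ are immediate. For the inductive step the crucial closure lemma is: \emph{if $f$ is Lorentzian and $\mathbf{v}\in\mathbb{R}_{\geq 0}^n$, then $\mathbf{D}_{\mathbf{v}}(f)$ is Lorentzian.} The $M$-convexity of $\operatorname{supp}(\mathbf{D}_{\mathbf{v}}(f))$ follows from that of $\operatorname{supp}(f)$ by a direct exchange-axiom manipulation. For the Hessian condition, fix $\bm\beta$ with $|\bm\beta|=d-3$; then $\partial^{\bm\beta}(\mathbf{D}_{\mathbf{v}}(f))$ is a quadratic form whose Hessian, a constant symmetric matrix, equals $H_{\partial^{\bm\beta}(f)}$ evaluated at $\mathbf{v}$. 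Since $\partial^{\bm\beta}(f)$ is itself Lorentzian (partial derivatives preserve both defining conditions), Theorem \ref{hyp} asserts that $H_{\partial^{\bm\beta}(f)}(\mathbf{v})$ satisfies (Hyp) for every $\mathbf{v}\in\mathbb{R}_{>0}^n$, which is equivalent to its having at most one positive eigenvalue; continuity extends this to $\mathbb{R}_{\geq 0}^n$.

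Iterating the closure lemma, every $\mathbf{D}_{\mathbf{v}^{(k)}}\cdots\mathbf{D}_{\mathbf{v}^{(1)}}(f)$ is Lorentzian, and (1) reduces to showing that any Lorentzian polynomial $g$ of degree $d\geq 2$ is nonnegative and log-concave on $\mathbb{R}_{\geq 0}^n$. Nonnegativity is immediate. For log-concavity on $\mathbb{R}_{>0}^n$, the Hessian of $\log g$ at a point $x$ with $g(x)>0$ equals $\tfrac{1}{g}H_g-\tfrac{1}{g^{2}}(\nabla g)(\nabla g)^T$. Euler's identities $H_g\,x=(d-1)\nabla g$ and $\langle x,\nabla g\rangle=d\,g(x)$ give $\langle x,H_g x\rangle=d(d-1)g(x)>0$, so applying (Hyp) from Theorem \ref{hyp} with test vector $\mathbf{w}=x$ yields $g(x)\,\langle \mathbf{u},H_g\mathbf{u}\rangle\leq \tfrac{d-1}{d}\langle \mathbf{u},\nabla g\rangle^{2}$ for every $\mathbf{u}\in\mathbb{R}^n$. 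This forces the Hessian of $\log g$ to be negative semidefinite on $\{g>0\}$, and extension to the boundary is routine, closing the induction and completing the proof.
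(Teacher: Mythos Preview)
This theorem is not proved in the present paper at all: it is stated in the Preliminaries section as a citation of \cite[Theorem~2.30]{Bra20} and is used as a black box. There is therefore no ``paper's own proof'' to compare your attempt against. Any proof you supply here would be an import of the Br\"and\'en--Huh argument rather than a reconstruction of something the authors did.

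That said, your sketch is broadly faithful to the original proof in \cite{Bra20}. A few comments on soft spots: the step you describe most loosely is the implication (2)~$\Rightarrow$ M-convexity of the support. Your claim that a failure of M-convexity can always be ``propagated through appropriate partial derivatives'' down to a quadratic of the shape $ax_i^2+bx_j^2$ skips the real combinatorial work; in \cite{Bra20} this is handled by first characterising M-convex sets via a local exchange property and then carefully choosing the derivative string, and it is not a one-line observation. Also, in your (3)~$\Rightarrow$~(1) closure lemma you appeal to Theorem~\ref{hyp}, but note that in this paper Theorem~\ref{hyp} is itself quoted as a consequence of the Lorentzian machinery, so invoking it inside a proof of Theorem~\ref{dengjia} risks circularity depending on how one sets up the logical order; in \cite{Bra20} the corresponding eigenvalue statement is established earlier and independently. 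Finally, the preservation of M-convexity under $\mathbf{D}_{\mathbf v}$ for general $\mathbf v\in\mathbb{R}_{\ge 0}^n$ is not a ``direct exchange-axiom manipulation'' and in fact uses nontrivial results on M-convex functions due to Murota and collaborators.
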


Let $\mathrm{L}_n^d$ be the set of homogeneous Lorentzian polynomials in $n$ variables of degree $d$. 
Br{\"a}nd{\'e}n and Huh~\cite{Bra20} also provided a large class of linear operators that preserve the Lorentzian property. 
For our purpose, we need the following three results.

\begin{thm}[{\cite[Theorem 2.10]{Bra20}}]\label{suanzi}
Suppose that $n,m$ are positive integers, $\mathbf{x}=(x_1,\ldots,x_n)$ and $\mathbf{y}=(y_1,\ldots,y_m)$. If $f(\mathbf{x})\in \mathrm{L}_n^d$, then $f(A\mathbf{y})\in L_m^d$ for any $n\times m$ matrix $A$ with nonnegative entries.
\end{thm}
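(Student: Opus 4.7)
The plan is to invoke Theorem \ref{dengjia} and prove the corresponding statement for completely log-concave polynomials, which is much more amenable to direct manipulation via the chain rule than the Hessian formulation. Since $f$ has nonnegative coefficients and $A$ has nonnegative entries, the substitution $g(\mathbf{y}) := f(A\mathbf{y})$ is a homogeneous polynomial of degree $d$ with nonnegative coefficients, so by Theorem \ref{dengjia} it suffices to show that $g$ is completely log-concave; once this is done, Theorem \ref{dengjia} returns Lorentzianity automatically.

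The key computation is the chain rule for directional derivatives. For any $\mathbf{v} \in \mathbb{R}^m_{\geq 0}$, one checks directly that $\mathbf{D}_{\mathbf{v}}\, g(\mathbf{y}) = (\mathbf{D}_{A\mathbf{v}}\, f)(A\mathbf{y})$, and iterating yields $\mathbf{D}_{\mathbf{v}^{(1)}}\cdots \mathbf{D}_{\mathbf{v}^{(k)}}\, g(\mathbf{y}) = (\mathbf{D}_{A\mathbf{v}^{(1)}}\cdots \mathbf{D}_{A\mathbf{v}^{(k)}}\, f)(A\mathbf{y})$ for any nonnegative vectors $\mathbf{v}^{(1)},\ldots,\mathbf{v}^{(k)} \in \mathbb{R}^m_{\geq 0}$. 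Since $A$ has nonnegative entries, each $A\mathbf{v}^{(i)}$ lies in $\mathbb{R}^n_{\geq 0}$, so the complete log-concavity of $f$ guarantees that the inner polynomial $\mathbf{D}_{A\mathbf{v}^{(1)}}\cdots \mathbf{D}_{A\mathbf{v}^{(k)}}\, f$ is either identically zero or else nonnegative and log-concave on $\mathbb{R}^n_{\geq 0}$. In the first case the corresponding derivative of $g$ vanishes identically; in the second case, precomposing with the linear map $\mathbf{y}\mapsto A\mathbf{y}$ (which sends $\mathbb{R}^m_{\geq 0}$ into $\mathbb{R}^n_{\geq 0}$) preserves pointwise nonnegativity trivially, and preserves log-concavity because the composition of a concave function with an affine map is again concave. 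This establishes complete log-concavity of $g$, and hence Lorentzianity by Theorem \ref{dengjia}.

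The main obstacle I anticipate is a minor technicality: log-concavity is stated on the closed orthant $\mathbb{R}^n_{\geq 0}$, whereas $\log$ genuinely blows up where the polynomial vanishes. This is resolved by working first on the open orthant $\mathbb{R}^n_{> 0}$, where the composition-of-concave-with-affine argument applies verbatim, and then extending to the boundary by continuity, or equivalently by reading log-concavity in the multiplicative form $h(\tfrac{1}{2}\mathbf{x}+\tfrac{1}{2}\mathbf{y})^2 \geq h(\mathbf{x})h(\mathbf{y})$, which makes sense even where $h$ vanishes. A second point worth emphasizing is that routing the proof through Theorem \ref{dengjia} sidesteps the need to check M-convexity of $\operatorname{supp}(g)$ by hand; this property is repackaged inside the Lorentzian output at the end, rather than being verified directly from the combinatorics of how monomials of $f$ interact with the columns of $A$.
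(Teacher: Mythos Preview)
The paper does not supply its own proof of Theorem~\ref{suanzi}; it is quoted verbatim as \cite[Theorem~2.10]{Bra20} and used as a black box. So there is no in-paper argument to compare your proposal against.

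As a standalone derivation your argument is correct: the chain rule identity $\mathbf{D}_{\mathbf{v}^{(1)}}\cdots\mathbf{D}_{\mathbf{v}^{(k)}}\bigl(f\circ A\bigr)=\bigl(\mathbf{D}_{A\mathbf{v}^{(1)}}\cdots\mathbf{D}_{A\mathbf{v}^{(k)}}f\bigr)\circ A$ is exactly right, nonnegativity of $A$ keeps all the directions in the nonnegative orthant, and precomposition of a concave function with a linear map preserves concavity. Routing through Theorem~\ref{dengjia} is a clean way to avoid checking M-convexity of the support directly.

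One caveat worth recording: in \cite{Bra20} itself, Theorem~2.10 appears and is proved well before the equivalence Theorem~2.30, and in fact the development leading to Theorem~2.30 uses Theorem~2.10 (via Corollary~2.11 on directional derivatives). So if you were rebuilding the theory from scratch, invoking Theorem~\ref{dengjia} to prove Theorem~\ref{suanzi} would be circular. Within the present paper, where both results are imported as finished theorems from \cite{Bra20}, your deduction is a legitimate and illuminating shortcut; just be aware it is not how the original source establishes the statement.
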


\begin{cor}[{\cite[Corollary 2.11]{Bra20}}]\label{piandao}
If $f\in \mathrm{L}_n^d$, then $\mathbf{D}_\mathbf{v}(f) \in \mathrm{L}_n^{d-1}$ for any nonnegative vector $\mathbf{v}\in \mathbb{R}_{\geq 0}^n$. 
\end{cor}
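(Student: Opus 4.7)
The plan is to pass through the characterization of Lorentzian polynomials as completely log-concave polynomials provided by Theorem~\ref{dengjia}; under that lens the corollary becomes essentially a tautology. First I would verify the basic hypotheses for applying Theorem~\ref{dengjia} to $\mathbf{D}_{\mathbf{v}}(f)$: since $f$ is homogeneous of degree $d$ and $\mathbf{D}_{\mathbf{v}}=\sum_{i=1}^n v_i\,\partial_i$ is a constant-coefficient first-order differential operator, $\mathbf{D}_{\mathbf{v}}(f)$ is homogeneous of degree $d-1$; and because the entries of $\mathbf{v}$ and the coefficients of $f$ are all nonnegative, so too are the coefficients of $\mathbf{D}_{\mathbf{v}}(f)$.

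Next I would apply the implication (3)$\Rightarrow$(1) of Theorem~\ref{dengjia} to $f$: for every finite list $\mathbf{v^{(1)}},\ldots,\mathbf{v^{(k)}}\in\mathbb{R}_{\ge 0}^n$ of nonnegative vectors, the polynomial $\mathbf{D}_{\mathbf{v^{(1)}}}\cdots \mathbf{D}_{\mathbf{v^{(k)}}}(f)$ is identically zero or is nonnegative and log-concave on $\mathbb{R}_{\ge 0}^n$. Specializing to the extended list $(\mathbf{v},\mathbf{v^{(1)}},\ldots,\mathbf{v^{(k)}})$ yields exactly the statement that $\mathbf{D}_{\mathbf{v}}(f)$ is completely log-concave, and then the reverse implication (1)$\Rightarrow$(3) of Theorem~\ref{dengjia}, applied to $\mathbf{D}_{\mathbf{v}}(f)$, returns the desired Lorentzian property, giving $\mathbf{D}_{\mathbf{v}}(f)\in\mathrm{L}_n^{d-1}$.

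There is essentially no obstacle along this route; the only conceptual step is to recognize that prepending $\mathbf{v}$ to an arbitrary list of test directions is exactly what is needed to reduce the corollary to the defining property of complete log-concavity. A more direct-looking alternative would use Theorem~\ref{suanzi}: substituting $\mathbf{x}\mapsto \mathbf{x}+t\mathbf{v}$ with $t$ a fresh variable, the nonnegative $n\times(n+1)$ matrix $A=[\,I_n\mid\mathbf{v}\,]$ places $f(\mathbf{x}+t\mathbf{v})$ in $\mathrm{L}_{n+1}^d$, and then $\mathbf{D}_{\mathbf{v}}(f)$ is the coefficient of $t^1$ in this expansion; but realizing this alternative would require auxiliary lemmas stating that single-variable partial differentiation and evaluation of a variable at $0$ both preserve the Lorentzian class, so the completely log-concave route is strictly shorter.
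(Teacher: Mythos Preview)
The paper does not give its own proof of Corollary~\ref{piandao}; the result is simply quoted from \cite{Bra20} as background. So there is nothing in the paper to compare your argument against.

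Your route through Theorem~\ref{dengjia} is correct: complete log-concavity is manifestly closed under prepending a nonnegative directional derivative (the operators $\mathbf{D}_{\mathbf{v}}$ commute, so testing $\mathbf{D}_{\mathbf{v}}(f)$ against an arbitrary list $\mathbf{v}^{(1)},\dots,\mathbf{v}^{(k)}$ is the same as testing $f$ against the list $\mathbf{v},\mathbf{v}^{(1)},\dots,\mathbf{v}^{(k)}$), and the homogeneity and nonnegativity checks for $\mathbf{D}_{\mathbf{v}}(f)$ are exactly the ones you supplied. The only caveat is a logical one external to this paper: in \cite{Bra20} itself, Corollary~2.11 is established well before Theorem~2.30 and is among the ingredients on the way to the latter, so traced back to first principles your argument would be circular. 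Within the present paper, where both statements are imported as black boxes, this is harmless.

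For the record, the alternative you sketch via Theorem~\ref{suanzi} is closer to the original derivation in \cite{Bra20}, and the two ``missing'' auxiliary facts are not really missing here: evaluation of a variable at $0$ is itself an instance of Theorem~\ref{suanzi} (take the nonnegative matrix that drops that coordinate --- the paper even invokes exactly this in the proof of Theorem~\ref{strong}), and a single $\partial_t$ preserving the Lorentzian class follows directly from the definition once one notes that M-convex supports are closed under this operation. So either route works, and the second avoids any dependence on Theorem~\ref{dengjia}.
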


\begin{cor}[{\cite[Corollary 2.32]{Bra20}}]\label{product-preserve-L}
If $f\in \mathrm{L}_n^d$ and $g\in \mathrm{L}_m^e$, then $fg\in \mathrm{L}_{m+n}^{d+e}$. 
\end{cor}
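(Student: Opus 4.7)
The plan is to deduce the corollary directly from Theorem \ref{dengjia}, reducing the claim to showing that $fg$ is strongly log-concave. Homogeneity of degree $d+e$ and nonnegativity of the coefficients of $fg$ are both immediate from the corresponding properties of $f$ and $g$, so by the definition of strong log-concavity it suffices to verify that, for every multi-index $\bm\gamma \in \mathbb{N}^{n+m}$, the derivative $\partial^{\bm\gamma}(fg)$ is either identically zero or has concave logarithm on $\mathbb{R}^{n+m}_{>0}$.

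The crucial computational observation is that $f$ and $g$ involve disjoint sets of variables, which causes derivatives of the product to factor. Writing $\bm\gamma=(\bm\alpha,\bm\beta)$ with $\bm\alpha\in\mathbb{N}^n$ indexing the $\mathbf{x}$-derivatives and $\bm\beta\in\mathbb{N}^m$ the $\mathbf{y}$-derivatives, one then has
\[
\partial^{\bm\gamma}(fg) \;=\; \partial_{\mathbf{x}}^{\bm\alpha}\partial_{\mathbf{y}}^{\bm\beta}(fg) \;=\; \bigl(\partial_{\mathbf{x}}^{\bm\alpha} f\bigr)\bigl(\partial_{\mathbf{y}}^{\bm\beta} g\bigr),
\]
since each $\mathbf{x}$-derivative passes through $g$ (and symmetrically for $f$). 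If either factor is identically zero then so is the product, and there is nothing left to check. Otherwise, applying Theorem \ref{dengjia} to $f$ and to $g$ individually shows that $\log\bigl(\partial_{\mathbf{x}}^{\bm\alpha} f\bigr)$ is concave on $\mathbb{R}^n_{>0}$ and $\log\bigl(\partial_{\mathbf{y}}^{\bm\beta} g\bigr)$ is concave on $\mathbb{R}^m_{>0}$. Each extends to a function on $\mathbb{R}^{n+m}_{>0}$ that is constant in the complementary block of variables, and this trivial extension preserves concavity, so
\[
\log\bigl(\partial^{\bm\gamma}(fg)\bigr) \;=\; \log\bigl(\partial_{\mathbf{x}}^{\bm\alpha} f\bigr) + \log\bigl(\partial_{\mathbf{y}}^{\bm\beta} g\bigr)
\]
is a sum of concave functions on $\mathbb{R}^{n+m}_{>0}$, hence concave. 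This confirms that $fg$ is strongly log-concave, and Theorem \ref{dengjia} in the reverse direction then delivers $fg \in \mathrm{L}_{n+m}^{d+e}$.

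Because the disjoint variable sets make the relevant partial derivatives factor multiplicatively, no real obstacle arises in this route; the whole argument is driven by this factorization together with the equivalence given by Theorem \ref{dengjia}. Alternative strategies, such as expanding the Hessian of $fg$ and verifying the single-positive-eigenvalue condition directly on each iterated partial derivative, or working through the completely log-concave definition via arbitrary directional derivatives on $\mathbb{R}^{n+m}_{\ge 0}$, would require considerably messier Leibniz-type expansions in which $\mathbf{x}$- and $\mathbf{y}$-derivatives become entangled, which is precisely why routing through the strong log-concave characterization appears to be the cleanest path.
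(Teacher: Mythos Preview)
The paper does not give its own proof of this corollary; it is simply quoted from Br\"and\'en--Huh \cite[Corollary 2.32]{Bra20} as a preliminary fact. Your argument is correct and is essentially the route taken in the original reference: pass through the equivalence of Theorem~\ref{dengjia} with strong log-concavity, exploit that $f$ and $g$ live in disjoint variable sets so that every partial derivative of $fg$ factors as $(\partial^{\bm\alpha}f)(\partial^{\bm\beta}g)$, and conclude by summing the two concave logarithms.
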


We also need a result due to Anari, Liu, Oveis Gharan, and Vinzant \cite{Ana18}, which plays an important role in their proof of Mason's ultra-log-concavity conjecture and can be restated as follows. 

\begin{thm}[{\cite[Theorem 4.1]{Ana18}}]\label{xiangcheng}
Suppose that $M$ is a matroid with ground set $[n]$ and independent set family $\mathcal{I}$. Then 
\begin{eqnarray}\label{eq-GM-x}
G_M(x,x_1,\ldots,x_n)=\sum_{I\in\mathcal{I}} x^{\,n-|I|}\prod_{i\in I} x_i
\end{eqnarray}
is a Lorentzian polynomial in $\mathbb{R}[x,x_1,\ldots,x_n]$.
\end{thm}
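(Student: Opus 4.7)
The plan is to prove that $G_M$ is Lorentzian by strong induction on $n=|E(M)|$, with the small cases $n\le 2$ handled by direct check. For the inductive step we apply the Br\"and\'en--Huh characterization: it is enough to verify (i) $\operatorname{supp}(G_M)$ is $M$-convex and (ii) for every $\alpha\in\mathbb{N}^{n+1}$ with $|\alpha|=n-2$, the Hessian of $\partial^{\alpha}G_M$ has at most one positive eigenvalue. Part (i) follows from the matroid augmentation axiom: for support vectors $(n-|I|,\chi_I)$ and $(n-|J|,\chi_J)$, any coordinate excess is absorbed either by exchanging with the $x$-coordinate (which carries the size information) or by augmenting $I\setminus\{i\}$ by an element of $J\setminus I$ inside $\mathcal{I}(M)$.

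For part (ii), since $G_M$ is multilinear in each $x_i$, $\alpha$ must have the form $(a,\chi_S)$ with $a+|S|=n-2$ and $S\subseteq[n]$. If $S\notin\mathcal{I}(M)$ then $\partial^{\alpha}G_M=0$ trivially, so assume $S\in\mathcal{I}(M)$; a direct expansion then gives
\[
\partial^{\alpha}G_M \;=\; \partial_x^{a}\, G_{M/S}(x,\mathbf{x}_{[n]\setminus S}).
\]
When $S\ne\emptyset$ the matroid $M/S$ has strictly fewer elements than $M$, so $G_{M/S}$ is Lorentzian by the induction hypothesis, and Corollary \ref{piandao} gives Lorentzianness of $\partial_x^a G_{M/S}$ and hence the desired Hessian bound. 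The only subtle case is $S=\emptyset$, i.e.\ $\alpha=(n-2,0,\dots,0)$, where induction on $n$ is unavailable. The key observation is that $\partial_x^{n-2}G_M$ only registers independent sets of size at most $2$, so
\[
\partial_x^{n-2}G_M \;=\; \partial_x^{n-2}G_{T^2(M)},
\]
where $T^2(M)$ denotes the rank-$2$ truncation of $M$. It therefore suffices to prove, as an auxiliary statement, that $G_N$ is Lorentzian for every matroid $N$ of rank at most $2$; Corollary \ref{piandao} applied to $G_{T^2(M)}$ then finishes this case.

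The auxiliary statement is established by rank. Rank $0$ gives $G_N=x^n$ and rank $1$ gives $G_N=x^{n-1}\bigl(x+\sum_{j\text{ nonloop}}x_j\bigr)$, both evidently Lorentzian. For rank $2$, reducing trivially to the loop-free case and letting $P_1,\dots,P_k$ denote the parallel classes with $Y_a=\sum_{j\in P_a}x_j$, one computes
\[
G_N(x,x_1,\dots,x_n) \;=\; x^{n-k}\, G_{U_{2,k}}(x,Y_1,\dots,Y_k).
\]
Applying Theorem \ref{suanzi} (nonnegative linear substitution) and Corollary \ref{product-preserve-L} reduces the lemma to showing that $G_{U_{2,k}}$ is Lorentzian, which is a finite explicit check: using the $S_k$-symmetry of the uniform matroid, one verifies the Hessian condition on $\partial^{\alpha}G_{U_{2,k}}$ for each orbit type of $\alpha$ with $|\alpha|=k-2$. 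The principal obstacle is the apparent circularity at $S=\emptyset$, and the truncation identity together with the self-contained rank-$\le 2$ lemma is precisely what breaks this circle.
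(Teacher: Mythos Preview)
The paper does not prove this statement at all: Theorem~\ref{xiangcheng} is quoted verbatim as \cite[Theorem~4.1]{Ana18} and used as a black box throughout Sections~\ref{s3} and~\ref{s4}. There is therefore no ``paper's own proof'' to compare your attempt to.

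That said, your argument stands on its own and is essentially correct. The identification $\partial^{\alpha}G_M=\partial_x^{a}G_{M/S}$ for $\alpha=(a,\chi_S)$ with $S\in\mathcal I$ is right (using that $I\supseteq S$ is independent in $M$ iff $I\setminus S$ is independent in $M/S$), the $M$-convexity check is the standard augmentation argument, and your device of passing to the rank-$2$ truncation to break the circularity at $S=\emptyset$ is clean. Two remarks. First, the phrase ``finite explicit check'' for $G_{U_{2,k}}$ is slightly misleading since $k$ is unbounded; what you mean (and what works) is that up to $S_k$-symmetry there are only three orbit types of $\alpha$ with $|\alpha|=k-2$, namely $(k-2,\emptyset)$, $(k-3,\{1\})$, $(k-4,\{1,2\})$, and each Hessian is easily seen to have at most one positive eigenvalue. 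Second, when you multiply $x^{n-k}$ against $G_{U_{2,k}}(x,Y_1,\dots,Y_k)$, both factors involve the variable $x$; Corollary~\ref{product-preserve-L} as stated in the paper is phrased for disjoint variable sets, so you should either cite the Br\"and\'en--Huh product result in its general form or route through a dummy variable and then specialize via Theorem~\ref{suanzi}.

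For context, the original proof in \cite{Ana18} is organized differently: it proceeds by induction on the rank of $M$ and establishes complete log-concavity directly, rather than inducting on $|E|$ and isolating the rank-$\le 2$ case as a separate lemma. Your route is a legitimate alternative and arguably more elementary at the cost of the explicit $U_{2,k}$ computation.
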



\section{Proof of Dowling's conjecture}\label{s3}

In this section we aim to give a proof of Dowling's polynomial conjecture. Since Conjecture \ref{zhaoconj} implies Conjecture \ref{dowlingconj}, we will directly prove the former conjecture. 

As shown in Proposition \ref{dowlprop} and Lemma \ref{zhaothm}, 
both Conjecture \ref{dowlingconj} and Conjecture \ref{zhaoconj} are equivalent to some inequalities satisfied by the number of independent set bipartitions of the ground sets of matroids. Let us first interpret these numbers as the coefficients of some polynomials associated with $G_M(x,x_1,\ldots,x_n)$ defined by \eqref{eq-GM-x}. For notational convenience, set
\[ \mathbf{x} = (x, x_1, \ldots, x_{n}),\qquad \ \mathbf{y} = (y, y_1, \ldots, y_{n}),\]
 and
\[ G_M(\mathbf{x}) = G_M(x,x_1,\ldots,x_n),\qquad \ G_M(\mathbf{y}) = G_M(y,y_1,\ldots,y_n).\]
We define a linear operator $\mathbf{S_i}$ on the polynomial ring $\mathbb{R}[\mathbf{x},\mathbf{y}]$, whose action on a polynomial $f\in \mathbb{R}[\mathbf{x},\mathbf{y}]$ is given by
\[ \mathbf{S}_i(f) = \left( \frac{\partial f}{\partial x_i} + \frac{\partial f}{\partial y_i} \right) \bigg|_{x_i = y_i = 0}. \]
Let $\mathbf{S} =\mathbf{S_1\cdots S_n}$. We have the following result.

\begin{lem}\label{lem-operator-s}
For any matroid $M=(E,\mathcal{I})$ of size $n$ and any $0\leq i\leq n$, there holds 
\begin{eqnarray}\label{eq-pi-interp}
\pi_{n-i,i}(M)=\pi_{i,n-i}(M)=[x^{n-i}y^{i}]\mathbf{S}(G_M(\mathbf{x})G_M(\mathbf{y})).
\end{eqnarray}
\end{lem}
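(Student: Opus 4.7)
The plan is a direct expansion and book-keeping argument: we compute $\mathbf{S}(G_M(\mathbf{x})G_M(\mathbf{y}))$ monomial by monomial and identify the result with the generating function for independent bipartitions of $E$.

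First I would expand the product using the definition \eqref{eq-GM-x}:
\[
G_M(\mathbf{x})\,G_M(\mathbf{y}) \;=\; \sum_{I,J\in\mathcal{I}} x^{\,n-|I|}\,y^{\,n-|J|}\,\Bigl(\prod_{i\in I}x_i\Bigr)\Bigl(\prod_{j\in J}y_j\Bigr).
\]
Call the $(I,J)$-summand $m_{I,J}(\mathbf{x},\mathbf{y})$. Since the operators $\mathbf{S}_k$ for distinct $k$ act on disjoint sets of variables, they commute, and it suffices to understand the action of a single $\mathbf{S}_k$ on $m_{I,J}$.

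Next I would do the four-case analysis according to whether $k$ belongs to $I$, to $J$, to both, or to neither. If $k\in I\cap J$, the monomial contains both $x_k$ and $y_k$; after either partial derivative the surviving monomial still contains the other variable, which is killed by the evaluation $x_k=y_k=0$, so $\mathbf{S}_k(m_{I,J})=0$. If $k\notin I\cup J$, both partials vanish, again giving $0$. If $k$ lies in exactly one of $I,J$, say $k\in I\setminus J$, then $\partial_{y_k}m_{I,J}=0$ while $\partial_{x_k}m_{I,J}$ is $m_{I,J}$ with the factor $x_k$ stripped off, and this surviving monomial is unaffected by setting $x_k=y_k=0$. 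An analogous conclusion holds when $k\in J\setminus I$. Consequently $\mathbf{S}_k(m_{I,J})\ne 0$ iff $k\in I\triangle J$, and in that case $\mathbf{S}_k$ simply removes the unique small variable ($x_k$ or $y_k$) indexed by $k$.

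Applying $\mathbf{S}=\mathbf{S}_1\cdots\mathbf{S}_n$ therefore annihilates $m_{I,J}$ unless every $k\in[n]$ lies in exactly one of $I,J$, i.e.\ unless $(I,J)$ is an ordered partition of $E$ into two independent sets. When $(I,J)$ is such a partition, every small variable is stripped off and
\[
\mathbf{S}(m_{I,J}) \;=\; x^{\,n-|I|}\,y^{\,n-|J|} \;=\; x^{\,|J|}\,y^{\,|I|},
\]
using $|I|+|J|=n$. Summing over all such $(I,J)$,
\[
\mathbf{S}\bigl(G_M(\mathbf{x})G_M(\mathbf{y})\bigr) \;=\; \sum_{\substack{(I,J)\text{ indep.\ partition}\\ \text{of } E}} x^{\,|J|}\,y^{\,|I|}.
\]

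Finally I would read off the coefficient: $[x^{\,n-i}y^{\,i}]\mathbf{S}(G_M(\mathbf{x})G_M(\mathbf{y}))$ counts independent ordered partitions $(I,J)$ of $E$ with $|I|=i$ and $|J|=n-i$, which is exactly $\pi_{i,n-i}(M)$. The identity $\pi_{i,n-i}(M)=\pi_{n-i,i}(M)$ is immediate by swapping the two blocks of the partition. There is no real obstacle here; the only thing to be careful about is the case distinction in the action of a single $\mathbf{S}_k$, particularly that the evaluation at $x_k=y_k=0$ is what eliminates the $k\in I\cap J$ case.
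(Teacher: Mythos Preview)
Your proof is correct and follows essentially the same approach as the paper: expand the product into terms indexed by pairs $(I,J)\in\mathcal{I}\times\mathcal{I}$, observe that $\mathbf{S}_k$ kills any term with $k\in I\cap J$ or $k\notin I\cup J$, so that only independent bipartitions survive and contribute $x^{n-|I|}y^{n-|J|}$. Your write-up is slightly more explicit about the four-case analysis and the commutativity of the $\mathbf{S}_k$, but the argument is the same.
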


\begin{proof}
By \eqref{eq-GM-x} a general term of $G_M(\mathbf{x})G_M(\mathbf{y})$ is 
$x^{n-|I|}y^{n-|I'|}\prod_{i\in I} x_i\prod_{j\in I'} y_j$, denoted by $g_{I,I'}$,
for some $I,I'\in\mathcal{I}$. If $I\cap I'\neq \emptyset$, say $k\in I\cap I'$, it is clear that $\mathbf{S_k}(g_{I,I'})=0$, and hence $\mathbf{S}(g_{I,I'})=0$. If $I\cup I'\neq [n]$, say $k\notin I\cup I'$, it is also clear that $\mathbf{S_k}(g_{I,I'})=0$, and hence $\mathbf{S}(g_{I,I'})=0$. When $I\cup I'=[n]$ and $I\cap I'= \emptyset$, i.e., $(I,I')$ is an independent bipartition of $M$, one can verify that $\mathbf{S}(g_{I,I'})=x^{n-|I|}y^{n-|I'|}$. This completes the proof. 
\end{proof}

The main result of this section is as follows.

\begin{thm}\label{strong}
If $M$ is a matroid of size $2k$, then 
\begin{eqnarray}\label{eq-main}
\pi_{k,k}(M) \ge \left(1+\frac{1}{k}\right)\pi_{k-1,k+1}(M).
\end{eqnarray}
\end{thm}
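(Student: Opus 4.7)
The plan is to encode both $\pi_{k,k}(M)$ and $\pi_{k-1,k+1}(M)$ as two consecutive coefficients of a single bivariate polynomial and then exploit a hidden symmetry to extract the $(1+1/k)$ strengthening from its Lorentzian structure. By Lemma~\ref{lem-operator-s} with $n=2k$, the polynomial
\[ F(x,y) := \mathbf{S}\bigl(G_M(\mathbf{x})\,G_M(\mathbf{y})\bigr) = \sum_{i=0}^{2k} a_i\,x^{2k-i}y^i \]
satisfies $a_i = \pi_{2k-i,i}(M)$; in particular $a_k = \pi_{k,k}(M)$ and $a_{k+1} = \pi_{k-1,k+1}(M)$, so the theorem reduces to $a_k \ge (1+1/k)\,a_{k+1}$. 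A crucial observation is that $F(x,y)=F(y,x)$: the polynomial $G_M(\mathbf{x})G_M(\mathbf{y})$ is invariant under the involution that simultaneously swaps $x\leftrightarrow y$ and $x_i\leftrightarrow y_i$ for every $i$, and $\mathbf{S}=\mathbf{S_1}\cdots \mathbf{S_{2k}}$ commutes with each $x_i\leftrightarrow y_i$ swap because $\mathbf{S_i}$ is manifestly symmetric in $x_i$ and $y_i$. Consequently $a_i = a_{2k-i}$.

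The second step is to verify that $F(x,y)$ is Lorentzian of degree $2k$ in $x,y$. Theorem~\ref{xiangcheng} makes $G_M(\mathbf{x})$ and $G_M(\mathbf{y})$ each Lorentzian of degree $2k$, and Corollary~\ref{product-preserve-L} upgrades the product to a Lorentzian polynomial of degree $4k$ in $4k+2$ variables. Each operator $\mathbf{S_i}$ decomposes as the directional derivative $\partial_{x_i}+\partial_{y_i}$, which preserves the Lorentzian class by Corollary~\ref{piandao} since $\mathbf{e}_{x_i}+\mathbf{e}_{y_i}$ is a nonnegative vector, followed by the substitution $x_i=y_i=0$, which is a special case of Theorem~\ref{suanzi} in which $A$ is the coordinate-projection matrix that kills those two entries. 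Iterating for $i=1,\dots,2k$ leaves a homogeneous Lorentzian polynomial of degree $2k$ in just the two variables $x$ and $y$.

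Finally, I would invoke the rigid shape of bivariate Lorentzian polynomials. Setting $b_i := a_i/\binom{2k}{i}$, a standard consequence of the Lorentzian definition (applied to $F$ via Theorem~\ref{hyp} after successive directional derivatives of the Hessian, or via the equivalences in Theorem~\ref{dengjia}) is that $(b_i)$ is a nonnegative log-concave sequence with no internal zeros. Combined with the symmetry $b_i = b_{2k-i}$, log-concavity at the index $k$ yields
\[ b_k^2 \;\ge\; b_{k-1}\,b_{k+1} \;=\; b_{k+1}^2, \]
so $b_k\ge b_{k+1}$, and therefore
\[ \frac{a_k}{a_{k+1}} \;=\; \frac{b_k}{b_{k+1}}\cdot\frac{\binom{2k}{k}}{\binom{2k}{k+1}} \;\ge\; 1\cdot\frac{k+1}{k} \;=\; 1+\frac{1}{k}, \]
as required (the degenerate case $a_{k+1}=0$ is trivial). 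The main conceptual step is the symmetrization trick of paragraph one: without $F(x,y)=F(y,x)$, the bare log-concavity of $(b_i)$ would give only Dowling's weaker bound $b_k^2\ge b_{k-1}b_{k+1}$ and would miss the decisive $(1+1/k)$ factor of Zhao's strengthening; it is precisely working with the \emph{product} $G_M(\mathbf{x})G_M(\mathbf{y})$ and the \emph{symmetric} partitioning operator $\mathbf{S}$ that forces the unimodal peak of $(b_i)$ to sit at $b_k$.
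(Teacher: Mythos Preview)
Your proof is correct and follows essentially the same route as the paper: construct $F(x,y)=\mathbf{S}(G_M(\mathbf{x})G_M(\mathbf{y}))$, show it is Lorentzian via Theorem~\ref{xiangcheng}, Corollary~\ref{product-preserve-L}, Corollary~\ref{piandao} and Theorem~\ref{suanzi}, and then combine the resulting ultra log-concavity at the middle index with the symmetry $\pi_{k-1,k+1}=\pi_{k+1,k-1}$. The only cosmetic difference is that the paper writes out the quadratic $\partial_x^{k-1}\partial_y^{k-1}F$ explicitly and applies (Hyp) to its Hessian with $\mathbf{v}=(1,0)$, $\mathbf{w}=(0,1)$, whereas you package the same computation as ``$b_i=a_i/\binom{2k}{i}$ is log-concave'' before invoking $b_{k-1}=b_{k+1}$; these are the identical inequality in different notation.
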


\begin{proof}
Theorem~\ref{xiangcheng} tells that 
$G_M(\mathbf{x})$ is a Lorentzian polynomial. By Corollary~\ref{product-preserve-L}, we see that $G_M(\mathbf{x})G_M(\mathbf{y})$ is also Lorentzian. By Theorem \ref{suanzi} and Corollary \ref{piandao}, each operator $\mathbf{S}_i$ preserves the Lorentzian property. (A special case of Theorem \ref{suanzi} implies that if $f(x_1,\ldots,x_{n-1},x_n)$ is Lorentzian so is $f(x_1,\ldots,x_{n-1},0)$.) Hence the polynomial $\mathbf{S}(G_M(\mathbf{x})G_M(\mathbf{y}))$ is a Lorentzian polynomial in $\mathbb{R}[x,y]$.

By Lemma \ref{lem-operator-s}, we get that 
\begin{align*}
\mathbf{S}(G_M(\mathbf{x})G_M(\mathbf{y}))&=\sum_{i=0}^{2k}\pi_{i,2k-i}x^iy^{2k-i}.
\end{align*}
It is routine to verify that 
\begin{align*}
    f_M(x,y):=&\frac{\partial^{2k-2}(\mathbf{S}(G_M(\mathbf{x})G_M(\mathbf{y})))}{\partial_x^{k-1}\partial_y^{k-1}}\\
=&\frac{1}{2}(k+1)!(k-1)!\pi_{k-1,k+1}x^2+k!k!\pi_{k,k}xy\\&+\frac{1}{2}(k+1)!(k-1)!\pi_{k-1,k+1}y^2,
\end{align*}
which is again Lorentzian, and its Hessian is
\begin{align*}
    H_{f_M}=\begin{bmatrix}
 (k-1)!(k+1)!\pi_{k-1,k+1} & k!k!\pi _{k,k}\\[5pt]
  k!k!\pi _{k,k}& (k-1)!(k+1)!\pi_{k+1,k-1}
\end{bmatrix}.
\end{align*}
By Theorem \ref{hyp}, $H_{f_M}$ satisfies (Hyp) for any $(x,y) \in \mathbb{R}_{\geq 0}^2$. Taking $\mathbf{v}=(1,0)$ and $\mathbf{w}=(0,1)$, we get 
\begin{align}
    &\langle \mathbf{v}, H_{f_M} \mathbf{w}\rangle^2 = (k!k!\pi_{k,k})^2,\label{vw21}
    \\[5pt]
    &\langle \mathbf{v}, H_{f_M} \mathbf{v}\rangle = (k-1)!(k+1)!\pi_{k-1,k+1},\label{vw22}\\[5pt]
    &\langle \mathbf{w}, H_{f_M} \mathbf{w}\rangle = (k-1)!(k+1)!\pi_{k+1,k-1}.\label{vw23}
\end{align}
Substituting (\ref{vw21}), (\ref{vw22}) and (\ref{vw23}) into (Hyp) leads to \eqref{eq-main}, as desired.
\end{proof}

Combining Theorem \ref{strong} and Lemma \ref{zhaothm}, we immediately obtain the following result, as conjectured in Conjecture \ref{zhaoconj} and independently proved in \cite[Theorem 1.6]{Ard26}.
\begin{cor}
For any matroid $M$ and $0< k< r_M$, we have
\begin{align*}
f_k^2(M) \ge \left(1+\frac{1}{k}\right)\,f_{k-1}(M)\,f_{k+1}(M).
\end{align*}
\end{cor}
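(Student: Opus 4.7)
The plan is to encode $\pi_{k,k}(M)$ and $\pi_{k-1,k+1}(M)$ as coefficients of a bivariate Lorentzian polynomial, then extract the desired inequality from the hyperbolicity of its Hessian via Theorem \ref{hyp}. The Lorentzian machinery is essentially tailor-made for this kind of ultra-log-concave inequality with the sharp factor $1+\tfrac{1}{k}$.

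First, by Lemma \ref{lem-operator-s}, the bivariate polynomial
\[
F(x,y) := \mathbf{S}\bigl(G_M(\mathbf{x})G_M(\mathbf{y})\bigr) = \sum_{i=0}^{2k} \pi_{i,2k-i}(M)\, x^{i} y^{2k-i}
\]
has both $\pi_{k,k}$ and $\pi_{k-1,k+1}$ among its coefficients (recall that $\pi_{k+1,k-1}=\pi_{k-1,k+1}$). I would verify that $F\in \mathrm{L}_2^{2k}$ by chaining together the following preservation properties: $G_M(\mathbf{x})$ is Lorentzian by Theorem \ref{xiangcheng}; the product $G_M(\mathbf{x})G_M(\mathbf{y})$ in disjoint variables is Lorentzian by Corollary \ref{product-preserve-L}; and each operator $\mathbf{S}_i=(\partial_{x_i}+\partial_{y_i})\bigl|_{x_i=y_i=0}$ preserves the Lorentzian property, the directional-derivative step by Corollary \ref{piandao} and the substitution $x_i=y_i=0$ as the special case of Theorem \ref{suanzi} with a zero column in $A$.

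Next, applying Corollary \ref{piandao} another $2k-2$ times, I would form
\[
f_M(x,y) := \frac{\partial^{2k-2}F}{\partial x^{k-1}\partial y^{k-1}},
\]
which is again Lorentzian. Only the three monomials of $F$ with $i\in\{k-1,k,k+1\}$ survive; a short factorial bookkeeping produces $f_M(x,y)=\alpha x^2+\beta xy+\alpha y^2$ with $\alpha=\tfrac{1}{2}(k-1)!(k+1)!\pi_{k-1,k+1}$ and $\beta=k!\,k!\,\pi_{k,k}$. The Hessian $H_{f_M}$ is therefore the constant symmetric $2\times 2$ matrix with diagonal $2\alpha$ and off-diagonal $\beta$.

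Finally, Theorem \ref{hyp} guarantees that $H_{f_M}$ satisfies (Hyp). Choosing $\mathbf{v}=(1,0)$ and $\mathbf{w}=(0,1)$ collapses (Hyp) to $\beta^2\ge (2\alpha)^2$, from which the desired inequality $\pi_{k,k}\ge \tfrac{k+1}{k}\pi_{k-1,k+1}$ follows immediately after taking square roots and cancelling $k!\,k!$ on both sides. I do not anticipate a substantive obstacle, since every ingredient is already supplied by the preliminaries; the only mild subtlety is that (Hyp) requires $\langle \mathbf{w},H_{f_M}\mathbf{w}\rangle>0$, but if this entry vanishes then $\pi_{k-1,k+1}=0$ and the conclusion degenerates to $\pi_{k,k}\ge 0$, which is trivial.
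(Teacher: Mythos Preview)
Your argument faithfully reproduces the paper's proof of Theorem \ref{strong}, establishing $\pi_{k,k}(M)\ge\bigl(1+\tfrac1k\bigr)\pi_{k-1,k+1}(M)$ for a matroid $M$ of size $2k$. However, this is not the statement you were asked to prove. The Corollary concerns the polynomial inequality $f_k^2(M)\ge\bigl(1+\tfrac1k\bigr)f_{k-1}(M)f_{k+1}(M)$ for an \emph{arbitrary} matroid $M$ with $0<k<r_M$; there is no hypothesis that $|E|=2k$, and the objects $f_k(M)$ are multivariate polynomials in $|E|$ variables, not the single numbers $\pi_{i,j}$. Your write-up never mentions $f_k(M)$ at all, and the quantities $\pi_{k,k}(M)$, $\pi_{k-1,k+1}(M)$ you work with are not even defined unless $|E|=2k$.

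The missing bridge is Lemma \ref{zhaothm}: the polynomial inequality at level $l$ holds if and only if $\pi_{k,k}(N)\ge\bigl(1+\tfrac1l\bigr)\pi_{k-1,k+1}(N)$ for every $k\le l$ and every minor $N$ of $M$ of size $2k$ and depth $l-k$. Your argument (equivalently, Theorem \ref{strong}) supplies the stronger bound $\pi_{k,k}(N)\ge\bigl(1+\tfrac1k\bigr)\pi_{k-1,k+1}(N)$ for each such $N$, and since $k\le l$ gives $1+\tfrac1k\ge 1+\tfrac1l$, the hypothesis of Lemma \ref{zhaothm} is verified and the Corollary follows. This is exactly how the paper concludes: the Corollary is the one-line combination of Theorem \ref{strong} with Lemma \ref{zhaothm}. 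Without invoking that lemma (or reproving the nontrivial coefficient-comparison argument behind it), your proof does not reach the target inequality.
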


As a corollary, we confirm Dowling's polynomial conjecture. 
\begin{cor}
For any matroid $M$ and $0< k< r_M$, we have
\begin{align*}
f_k^2(M) \ge f_{k-1}(M)\,f_{k+1}(M).
\end{align*}
\end{cor}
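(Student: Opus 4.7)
The plan is to obtain this inequality as an immediate consequence of the preceding corollary, which already establishes the strictly stronger bound $f_k^2(M)\ge(1+1/k)\,f_{k-1}(M)\,f_{k+1}(M)$ in the coefficient-wise partial order $\ge$ on $\mathbb{R}[x_1,\ldots,x_n]$. All of the substantive work has been done: Theorem~\ref{strong} together with Lemma~\ref{zhaothm} supplies the stronger form of Zhao's conjecture, and I only need to weaken it.

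The key observation is that each $f_j(M)$ is, by \eqref{fk}, a sum of distinct squarefree monomials with coefficient $1$, so both $f_{k-1}(M)$ and $f_{k+1}(M)$ — and therefore their product — have nonnegative coefficients. Since $k\ge 1$, the scalar $1/k$ is positive, hence $(1/k)\,f_{k-1}(M)\,f_{k+1}(M)$ again has nonnegative coefficients, which is exactly the statement
\[(1+\tfrac{1}{k})\,f_{k-1}(M)\,f_{k+1}(M)\ \ge\ f_{k-1}(M)\,f_{k+1}(M)\]
in the partial order. Chaining this with the previous corollary via transitivity of $\ge$ yields the required inequality.

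The main obstacle, to the extent there is one, lies not in the present corollary but upstream: the Hessian computation in the proof of Theorem~\ref{strong} and the passage between coefficient inequalities for the $f_k(M)$ and counting inequalities for independent $(i,j)$-partitions recorded in Proposition~\ref{dowlprop} and Lemma~\ref{zhaothm}. Once those are in place, the step from the Zhao-type inequality to Dowling's original inequality is nothing more than the elementary bound $1+1/k\ge 1$ applied coefficient by coefficient.
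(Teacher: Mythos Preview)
Your proposal is correct and matches the paper's approach exactly: the paper simply states this corollary as an immediate consequence of the preceding one (Zhao's inequality), and your added justification---that $f_{k-1}(M)f_{k+1}(M)$ has nonnegative coefficients, so multiplying by $1+1/k\ge 1$ preserves the coefficient-wise order---is the obvious (and only) way to fill in that one-line deduction.
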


\section{Further generalization}\label{s4}

The aim of this section is to give a further generalization of Conjecture \ref{dowlingconj} and Conjecture \ref{zhaoconj}. The main result of this section is as follows. 

\begin{thm}\label{strongp}
Let $M$ be a matroid with rank $r_M$. Then, for any integers $p \ge 2$ and $p-1<l<r_M$, 
\begin{eqnarray}\label{highd}
    f_l^p(M)\ge \left(1+\frac{1}{(p-1)l}\right)\left(1+\frac{2}{(p-1)l}\right)\cdots\left(1+\frac{p-1}{(p-1)l}\right)f_{l+1}^{p-1}(M)f_{l-p+1}(M). 
\end{eqnarray} 
\end{thm}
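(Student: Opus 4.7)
The plan is to adapt the approach of Theorem \ref{strong} to $p$ copies of $G_M$, reducing the problem to a single Lorentzian-polynomial lemma about symmetric polynomials. First I would introduce $p$ disjoint variable sets $\mathbf{x}^{(j)} = (x^{(j)}, x_1^{(j)}, \ldots, x_n^{(j)})$ for $j=1,\ldots,p$ and form the product $\mathcal{F} := \prod_{j=1}^p G_M(\mathbf{x}^{(j)})$, which is Lorentzian by Theorem \ref{xiangcheng} and Corollary \ref{product-preserve-L}. Set $\mathbf{S}_i^{(p)}(f) := \bigl(\sum_{j=1}^p \partial_{x_i^{(j)}} f\bigr)\big|_{x_i^{(1)} = \cdots = x_i^{(p)} = 0}$ and $\mathbf{S}^{(p)} := \mathbf{S}_1^{(p)} \cdots \mathbf{S}_n^{(p)}$. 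Arguing as in Lemma \ref{lem-operator-s},
$$\mathbf{S}^{(p)}(\mathcal{F}) = \sum_{i_1 + \cdots + i_p = n} \pi_{i_1, \ldots, i_p}(M) \prod_{j=1}^p (x^{(j)})^{n - i_j},$$
which is Lorentzian in $x^{(1)}, \ldots, x^{(p)}$ of degree $(p-1)n$, because each $\mathbf{S}_i^{(p)}$ preserves the Lorentzian property via Theorem \ref{suanzi} and Corollary \ref{piandao}.

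Next I would extend Lemma \ref{zhaothm} to the $p$-fold setting: the inequality \eqref{highd} should be equivalent to showing, for every minor $N$ of $M$ of size $pl$ and appropriate depth, the partition inequality $\pi_{l, \ldots, l}(N) \ge \prod_{i=1}^{p-1}\bigl(1 + \tfrac{i}{(p-1)l}\bigr)\, \pi_{l+1, \ldots, l+1, l-p+1}(N)$, via a polynomial-expansion argument parallel to Proposition \ref{dowlprop}. Fixing such a minor $N$ of size $pl$ and applying the construction to $N$ in place of $M$, I take $\partial_{x^{(j)}}^{(p-1)l - 1}$ for each $j$; by iterated application of Corollary \ref{piandao}, the result $h(x^{(1)}, \ldots, x^{(p)})$ is a Lorentzian polynomial of degree $p$ in $p$ variables. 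A direct coefficient computation gives
$$h = \sum_{b_1 + \cdots + b_p = p} \pi_{l+1-b_1, \ldots, l+1-b_p}(N) \prod_{j=1}^p \frac{((p-1)l - 1 + b_j)!}{b_j!}\,(x^{(j)})^{b_j}.$$
Symmetry of $\pi$ in its indices and of the factorial prefactor in $(b_1, \ldots, b_p)$ forces $h$ to be symmetric in $x^{(1)}, \ldots, x^{(p)}$, and a short bookkeeping step converts the target partition inequality into the single coefficient inequality
$$[x^{(1)} \cdots x^{(p)}]\,h \ge p! \cdot [(x^{(p)})^p]\, h.$$

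The hard part, and the main obstacle, is the following claim about symmetric Lorentzian polynomials, which I would prove by induction on $p$: for any homogeneous symmetric Lorentzian polynomial $h$ of degree $p$ in $p$ variables with nonnegative coefficients, $[x_1 \cdots x_p]\,h \ge p! \cdot [x_p^p]\, h$. For $p=2$, writing $h = a(x_1^2 + x_2^2) + bx_1x_2$, the Lorentzian hypothesis forces $b \ge 2a$ directly from (Hyp) applied to the constant Hessian, which is exactly the claim. For the inductive step with $p \ge 3$, set $\tilde h(x_1, \ldots, x_{p-1}) := (\partial_{x_p} h)\big|_{x_p = 0}$; by Corollary \ref{piandao} and Theorem \ref{suanzi} this is symmetric Lorentzian of degree $p-1$ in $p-1$ variables. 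Coefficient identification yields $[x_1 \cdots x_{p-1}]\tilde h = [x_1 \cdots x_p] h$ and $[x_j^{p-1}]\tilde h = [x_j^{p-1} x_p] h$, so the inductive hypothesis supplies $[x_1 \cdots x_p] h \ge (p-1)! \cdot [x_j^{p-1} x_p] h$. Separately, the two-variable restriction $h(x_1, 0, \ldots, 0, x_p)$ is Lorentzian of degree $p$ in two variables by Theorem \ref{suanzi}; its normalized coefficients $\tilde c_k := [x_1^k x_p^{p-k}] h/\binom{p}{k}$ form an ultra log-concave sequence, and concavity of $\log \tilde c_k$ evaluated at $k=1$ supplies $\tilde c_1^p \ge \tilde c_0^{p-1} \tilde c_p$, which unpacks to $[x_1^{p-1} x_p] h \ge p \cdot [x_p^p] h$. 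Multiplying the two bounds completes the induction, and combined with the reduction above proves Theorem \ref{strongp}.
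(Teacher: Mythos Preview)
Your overall strategy is sound and the symmetric-Lorentzian lemma is both correct and a pleasant standalone statement, but the reduction step is misstated. The $p$-fold analogue of Lemma~\ref{zhaothm} (this is the paper's Theorem~\ref{wyx_2}) does \emph{not} reduce \eqref{highd} to a partition inequality for minors of size $pl$; rather, for every $k\le l$ one needs
\[
\pi_{k,\ldots,k}(N)\ \ge\ \prod_{i=1}^{p-1}\Bigl(1+\tfrac{i}{(p-1)l}\Bigr)\,\pi_{k+1,\ldots,k+1,\,k-p+1}(N)
\]
for certain minors $N$ of size $pk$ and depth $l-k$. This is not fatal: your Lorentzian lemma, applied to a matroid $N$ of size $pk$, actually yields the stronger bound with $k$ in place of $l$ in the constant (rerun your coefficient computation with $pk$ replacing $pl$), and since $k\le l$ that is enough. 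So the slip is in the bookkeeping, not in the architecture.

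Where you genuinely diverge from the paper is in how you prove the partition inequality. The paper (Theorem~\ref{gaojie}) works directly with the Lorentzian polynomial $P_M(x_1,\ldots,x_p)$: for each $t=0,\ldots,p-2$ it differentiates and restricts to a \emph{bivariate} polynomial $P^{(t)}(x_{p-1},x_p)$, reads off the single step
\[
\pi_{\,\underbrace{\scriptstyle k+1,\ldots,k+1}_{t},\,\underbrace{\scriptstyle k,\ldots,k}_{p-1-t},\,k-t}\ \ge\ \Bigl(1+\tfrac{t+1}{(p-1)k}\Bigr)\,\pi_{\,\underbrace{\scriptstyle k+1,\ldots,k+1}_{t+1},\,\underbrace{\scriptstyle k,\ldots,k}_{p-2-t},\,k-t-1}
\]
from ultra log-concavity, and then telescopes. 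You instead pass in one shot to a degree-$p$ polynomial in $p$ variables and prove the clean inequality $[x_1\cdots x_p]\,h\ge p!\,[x_p^{\,p}]\,h$ for symmetric Lorentzian $h$ by induction on $p$. Your route isolates a reusable structural fact and hides the intermediate $\pi$-indices; the paper's route stays closer to the combinatorics and makes each step of the chain explicit. Both arguments ultimately rest on the same bivariate ultra log-concavity, just organized differently.
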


Motivated by the proofs of Conjecture \ref{dowlingconj} and Conjecture \ref{zhaoconj}, we will give an equivalent characterization of \eqref{highd} in terms of independent set partitions of the ground set of the matroid $M=(E,\mathcal{I})$. 
For any positive integer $p\geq 2$ and $p$-tuple $\mathbf{i}=(i_1,i_2\ldots,i_p)\in\mathbb{N}_{>0}^p$ of positive integers, let $\pi_{\mathbf{i}}(M)$ denote the number of ordered set partitions $(A_1,\,A_2,\,\ldots,\,A_p)$ of $E$ with $A_k\in \mathcal{I}$ and $|A_k|=i_k$. Given any subset $X\subseteq E$ and any positive integer $k$, let $M \bigodot X^k$ denote the matroid obtained from $M$ by replacing each element of $X$ by $k$ elements in parallel. For each $x\in X$ we use $x^{(0)}=x, x^{(1)},\ldots,x^{(k-1)}$ to distinguish these $k$ elements. For each $0\leq i\leq k-1$ we set $X^{(i)}=\{x^{(i)}\,\mid\, x\in X\}$ and let $\varphi_{X^{(i)}}: X\rightarrow X^{(i)}$ to be the canonical map which sends each $x\in X$ to $x^{(i)}$. For any sequence $\mathbf{X}=(X_1,X_2,\ldots,X_p)$ of pairwise disjoint subsets of $E$ and any $\mathbf{q}=(q_1,q_2,\ldots,q_{p-1})\in\mathbb{P}^{p-1}$, let 
\begin{align*}
M[{\mathbf{X}},\mathbf{q}]&=\left(M(X_1\cup \cdots\cup X_p)\bigodot X_1^{q_1}\cdots\bigodot X_{p-1}^{q_{p-1}}\right)/X_p.
\end{align*}
The depth of the matroid $M[{\mathbf{X}},\mathbf{q}]$ is defined to be the rank of $X_p$ in $M$.
We have the following result. 

\begin{thm}\label{wyx_2}
    Suppose that $M$ is a matroid on the ground set $E$, $p\geq 2,l\geq 1$ are positive integers, and $(n_1,n_2,\dots,n_p)$, $(m_1,m_2,\dots,m_p)$
    are two $p$-tuples of integers satisfying $n_1+n_2+\cdots+n_p=m_1+m_2+\cdots+m_p=0$.
    Then
    \begin{align}\label{ntuiguang}
    f_{l+n_1}(M)f_{l+n_2}(M)\cdots f_{l+n_p}(M)\ge f_{l+m_1}(M)f_{l+m_2}(M)\cdots f_{l+m_p}(M)\end{align}
    if and only if for every $k\le l$ and every matroid of the form $N=M[{\mathbf{X}},\mathbf{q}]$ of size $pk$  and depth $l-k$, \begin{align}\label{qidong}
    \pi_{(k+n_1,k+n_2,\dots,k+n_p)}(N)\ge\pi_{(k+m_1,k+m_2,\dots,k+m_p)}(N),
    \end{align}
    where $\mathbf{X}=(X_1,X_2,\ldots,X_p)$ is a $p$-tuple of pairwise disjoint  subsets of $E$ and $\mathbf{q}=(1,2,\ldots,p-1)$.
\end{thm}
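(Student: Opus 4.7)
The plan is to imitate the strategy of Dowling \cite{dowling1980} and Zhao \cite{Zha85} used to prove Proposition~\ref{dowlprop} and Lemma~\ref{zhaothm} (the $p=2$ cases of Theorem~\ref{wyx_2}), namely to compare the two sides of \eqref{ntuiguang} coefficient-wise as polynomials in $x_1,\ldots,x_n$ and to interpret each coefficient as a count of independent ordered partitions on an appropriate minor of the form $N=M[\mathbf{X},\mathbf{q}]$, thereby reducing \eqref{ntuiguang} to \eqref{qidong}.

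By \eqref{fk}, the product $f_{l+n_1}(M)\cdots f_{l+n_p}(M)$ expands as $\sum_{(I_1,\ldots,I_p)}\prod_{x\in E}x^{|\{k\,:\,x\in I_k\}|}$, where the sum runs over ordered $p$-tuples of independent sets of $M$ with $|I_k|=l+n_k$. Fix a monomial $x^{\bm{\alpha}}$ with $\bm{\alpha}=(a_x)_{x\in E}\in\mathbb{N}^n$ satisfying $\sum_x a_x=pl$, and set $X_j=\{x\in E:\,a_x=j\}$ for $1\le j\le p$. Since $M[\mathbf{X},\mathbf{q}]$ depends on $X_p$ only through its rank in $M$ (one may replace $X_p$ by any of its bases without altering the resulting minor), we may assume $X_p\in\mathcal{I}$, so that $|X_p|=l-k$ for some $0\le k\le l$ and the size of $N$ equals $\sum_{j=1}^{p-1}j|X_j|=pl-p|X_p|=pk$. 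The coefficient $[x^{\bm{\alpha}}]\bigl(f_{l+n_1}(M)\cdots f_{l+n_p}(M)\bigr)$ then counts assignments $(S_x)_{x\in X_1\cup\cdots\cup X_{p-1}}$ with $S_x\subseteq[p]$ and $|S_x|=j$ whenever $x\in X_j$, subject to $|\{x:\,k\in S_x\}|=k+n_k$ and $\{x:\,k\in S_x\}\cup X_p\in\mathcal{I}$ for each $k\in[p]$.

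The core step is to match these assignments with independent ordered partitions $(A_1,\ldots,A_p)$ of the ground set of $N$ satisfying $|A_k|=k+n_k$. In $N$ each element $x\in X_j$ is represented by $j$ parallel copies, and no independent set of $N$ can contain two copies of the same $x$. Given $(S_x)$, the $j$ copies of each $x\in X_j$ may be distributed bijectively among the parts $\{A_k:k\in S_x\}$ in exactly $j!$ ways; conversely, an independent ordered partition $(A_1,\ldots,A_p)$ of $N$ recovers the assignment $S_x=\{k:\text{a copy of } x \text{ lies in } A_k\}$. Undoing the parallel expansion and the contraction by $X_p$, one verifies that $A_k$ is independent in $N$ exactly when $\{x:k\in S_x\}\cup X_p$ is independent in $M$, so the two counting conditions agree. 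Therefore
\[
\pi_{(k+n_1,\ldots,k+n_p)}(N)=\Bigl(\prod_{j=1}^{p-1}(j!)^{|X_j|}\Bigr)\cdot [x^{\bm{\alpha}}]\bigl(f_{l+n_1}(M)\cdots f_{l+n_p}(M)\bigr),
\]
and the analogous formula with the same multiplicative factor holds for the $m$-tuple. Since $\sum_k n_k=\sum_k m_k=0$, both products in \eqref{ntuiguang} are homogeneous of total degree $pl$, and the coefficient-wise comparison reduces precisely to \eqref{qidong} for every $N$ of the prescribed size $pk$ and depth $l-k$, with $k$ ranging over $\{0,1,\ldots,l\}$.

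The main obstacle will be the combinatorial bookkeeping of the $j!$ factor coming from the distribution of parallel copies, together with a careful verification that the operations $\bigodot X_j^{q_j}$ for $j<p$ followed by the contraction by $X_p$ correctly translate the independence constraints on $(I_1,\ldots,I_p)$ in $M$ into independence of the parts $(A_1,\ldots,A_p)$ in $N$. A secondary subtlety is confirming that the reduction from a general $X_p$ to a basis of $X_p$ produces the same matroid $N$ while preserving the depth $l-k$ specified in \eqref{qidong}, so that all minors quantified in the hypothesis are indeed covered by the coefficient analysis.
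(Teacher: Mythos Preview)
Your plan is correct and follows the same coefficient-comparison strategy as the paper: classify the monomials of $f_{l+n_1}(M)\cdots f_{l+n_p}(M)$ by the level sets $X_j=\{x:a_x=j\}$ of their exponent vectors, reduce to the case where $X_p$ is independent, and identify the coefficient of each such monomial with an independent-partition count on the minor $N=M[\mathbf X,\mathbf q]$, so that \eqref{ntuiguang} becomes the coefficient-wise family of inequalities \eqref{qidong}. Your bookkeeping is in fact sharper than the paper's: the paper asserts a \emph{bijection} $\Phi$ between tuples $(I_1,\ldots,I_p)$ and independent ordered partitions of $N$ (it forces copy $x^{(k-1)}$ into the block with the $k$-th smallest index containing $x$), whereas in reality the forgetful map from partitions of $N$ back to tuples is $\prod_{j=1}^{p-1}(j!)^{|X_j|}$-to-one, exactly the factor you record; since this factor depends only on the monomial and not on whether one uses the $(n_i)$ or the $(m_i)$, the equivalence \eqref{ntuiguang}$\Leftrightarrow$\eqref{qidong} is unaffected and the paper's conclusion survives the slip.
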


\begin{proof} Without loss of generality, we may assume that all of sub-indices in \eqref{ntuiguang} and \eqref{qidong} are nonnegative. 
Observe that each term of $f_{l+n_1}(M)f_{l+n_2}(M)\cdots f_{l+n_p}(M)$ is an integer multiple
of a monomial of the form
$$h=\prod_{x_{i_1}\in X_1}x_{i_1}\prod_{x_{i_2}\in X_2}x_{i_2}^2\cdots\prod_{x_{i_p}\in X_p}x_{i_p}^p,$$
for some pairwise disjoint subsets $X_1,X_2,\dots,X_p$ (some of them might be empty) satisfying $|X_1|+2|X_2|+\cdots+p|X_p|=pl$.
The coefficient of $h$ is equal to the number of $p$-tuples  
$(I_1,I_2,\dots,I_p)$ of independent subsets of $E$ such that $|I_i|=l+n_i$ and 
$$h=\prod_{x_{i_1}\in I_1}x_{i_1}\prod_{x_{i_2}\in I_2}x_{i_2}\cdots\prod_{x_{i_p}\in I_p}x_{i_p}.$$
In this case, for $1\leq j\leq p$ each element $x$ of $X_j$ is contained in exactly $j$ members of $\{I_1,I_2,\dots,I_p\}$, and hence it uniquely determines $j$ subsets $I_{i_1(x)},I_{i_2(x)},\dots,I_{i_j(x)}$ 
such that $x\in I_{i_1(x)}\cap I_{i_2(x)}\cap \cdots\cap I_{i_j(x)}$.
Let $|X_p|=l-k$. We proceed to show that there exists a bijection $\Phi$ between the set of such tuples 
$(I_1,I_2,\dots,I_p)$ and the set of  independent $(k+n_1,k+n_2,\dots,k+n_p)$-partitions $(I'_1,I'_2,\dots,I'_p)$ of the ground set of the matroid $M[{\mathbf{X}},\mathbf{q}]$. Given $(I_1,I_2,\dots,I_p)$, for each $1\leq s\leq p$ let $I'_s$ be obtained from $I_s\setminus X_p$ in the following way: if $x$ also lies in $X_j$ for some $j$ (this is also unique) and hence determines a $j$-tuple $(I_{i_1(x)},I_{i_2(x)},\dots,I_{i_j(x)})$ with $i_1(x)<i_2(x)<\cdots<i_j(x)$, then replace $x$ with $\varphi_{X^{(k-1)}}(x)$ provided that $i_k(x)=s$. A little thought shows that  each $I'_s$ is an independent set of cardinality $k+n_s$ in $M[{\mathbf{X}},\mathbf{q}]$ and if $s\neq t$ then $I'_s\cap I'_t=\emptyset$. 
Recall that the matroid  $M[{\mathbf{X}},\mathbf{q}]$ is of size
$|X_1|+2|X_2|+\cdots+(p-1)|X_{p-1}|=pk$.
The fact that $|I'_s|=k+n_s$ and $n_1+\cdots+n_p=0$ tells that $(I'_1,I'_2,\dots,I'_p)$ is an independent 
 $(k+n_1,k+n_2,\dots,k+n_p)$-partitions of 
$M[{\mathbf{X}},\mathbf{q}]$.  Let $\Phi((I_1,I_2,\dots,I_p))=(I'_1,I'_2,\dots,I'_p)$. We note that $\Phi$ has a bijective inverse map $\Phi^{-1}$ which can be constructed by replacing each element of the form $x^{(k)}$ in $I'_s\cup X_p$ by $x$.

Therefore, the coefficient of $h$ in $f_{l+n_1}(M)f_{l+n_2}(M)\cdots f_{l+n_p}(M)$ is $\pi_{k+n_1,k+n_2,\dots,k+n_p}(N)$. 
For the same reason, the coefficient of $h$ in $f_{l+m_1}f_{l+m_2}\cdots f_{l+m_p}$ is $\pi_{k+m_1,k+m_2,\dots,k+m_p}(N)$. This means that (\ref{qidong}) implies (\ref{ntuiguang}).

It remains to show that (\ref{ntuiguang}) implies (\ref{qidong}). Suppose (\ref{ntuiguang}) holds and let $N=M[\mathbf X,\mathbf q]$ be a matroid of size $pk$ and depth $l-k$. We may assume that $X_p$ is independent in $M$, so that $|X_p|=l-k$. Define a
monomial $h$ as above, and observe that the coefficients of $h$ in $f_{l+n_1}(M)f_{l+n_2}(M)\cdots f_{l+n_p}(M)$ and $f_{l+m_1}(M)f_{l+m_2}(M)\cdots f_{l+m_p}(M)$ are given by the left and right sides of (\ref{qidong}), respectively. Thus (\ref{qidong}) follows from (\ref{ntuiguang}).
\end{proof}

Now to prove Theorem \ref{strongp} it suffices to prove the following result. 

\begin{thm}\label{gaojie}
    Given positive integers $p,k>0$, let $\mathbf{k}$ and $\mathbf{\tilde{k}}$ be defined by\begin{align}\label{eq-k}
\mathbf{k}=(\underbrace{k,k,\ldots,k}_{p’s}), 
\qquad \mathbf{\tilde{k}}=(\underbrace{k+1,k+1,\ldots,k+1}_{(p-1)’s},k-(p-1)).
\end{align}
    If $M$ is a matroid of size $pk$, then 
    \begin{align*}
        \pi_{\mathbf{k}}(M)\ge \left(1+\frac{1}{(p-1)k}\right)\left(1+\frac{2}{(p-1)k}\right)\cdots\left(1+\frac{p-1}{(p-1)k}\right) \pi_{\mathbf{\tilde{k}}}(M).
    \end{align*}
\end{thm}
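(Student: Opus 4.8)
The plan is to run the argument of Theorem~\ref{strong} with $p$ copies of $G_M$ in place of two and with a chain of $p-1$ uses of the Lorentzian property in place of a single one. Throughout set $n=pk$ and $m=(p-1)k$. I would first take variables $\mathbf{x}^{(j)}=(z_j,x_1^{(j)},\dots,x_n^{(j)})$ for $1\le j\le p$, define $\mathbf{S}_i(f)=\bigl(\sum_{j=1}^{p}\partial_{x_i^{(j)}}f\bigr)\big|_{x_i^{(1)}=\dots=x_i^{(p)}=0}$ for $i\in[n]$, and set $\mathbf{S}=\mathbf{S}_1\cdots\mathbf{S}_n$. Exactly as in Lemma~\ref{lem-operator-s}, $\mathbf{S}$ annihilates the monomial $\prod_{j=1}^p z_j^{\,n-|I_j|}\prod_{i\in I_j}x_i^{(j)}$ of $\prod_{j=1}^{p} G_M(\mathbf{x}^{(j)})$ unless $(I_1,\dots,I_p)$ is an ordered partition of $[n]$ into independent sets of $M$, in which case it returns $\prod_{j=1}^p z_j^{\,n-|I_j|}$; hence
\[ F(z_1,\dots,z_p):=\mathbf{S}\Bigl(\prod_{j=1}^{p} G_M(\mathbf{x}^{(j)})\Bigr)=\sum_{i_1+\cdots+i_p=n}\pi_{(i_1,\dots,i_p)}(M)\,z_1^{n-i_1}\cdots z_p^{n-i_p} \]
is homogeneous of degree $pm$. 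It is Lorentzian: $G_M$ is Lorentzian by Theorem~\ref{xiangcheng}, the product is by Corollary~\ref{product-preserve-L}, and each $\mathbf{S}_i$ preserves the Lorentzian property, being a nonnegative directional derivative (Corollary~\ref{piandao}) followed by the substitutions $x_i^{(j)}\mapsto 0$ (a special case of Theorem~\ref{suanzi}).

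Next I would link $\mathbf{k}$ to $\mathbf{\tilde{k}}$ by a chain. For $0\le t\le p-1$ let $\mathbf{k}^{[t]}$ be a $p$-tuple whose multiset of entries is $\{(k+1)^{t},\,k^{\,p-1-t},\,k-t\}$; these sum to $n$, $\mathbf{k}^{[0]}=\mathbf{k}$, $\mathbf{k}^{[p-1]}=\mathbf{\tilde{k}}$, and $\pi_{\mathbf{k}^{[t]}}(M)$ is well defined since $\pi_{\mathbf{i}}(M)$ depends only on the multiset of $\mathbf{i}$. The key step is to prove, for $0\le t\le p-2$, the inequality $\pi_{\mathbf{k}^{[t]}}(M)\ge(1+\tfrac{t+1}{m})\,\pi_{\mathbf{k}^{[t+1]}}(M)$. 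Now $\mathbf{k}^{[t]}$ and $\mathbf{k}^{[t+1]}$ agree in exactly $p-2$ coordinates ($t$ of them equal to $k+1$ and $p-2-t$ equal to $k$), say the first $p-2$; I would differentiate $F$ in $z_1,\dots,z_{p-2}$, using $m-1$ derivatives in each variable meant to carry value $k+1$ and $m$ derivatives in each meant to carry $k$, and then set $z_1=\dots=z_{p-2}=0$. By Corollary~\ref{piandao} and Theorem~\ref{suanzi} the outcome is a Lorentzian bivariate polynomial $\widetilde F_t=\kappa\sum_{s}c_s\,z_{p-1}^{s}z_p^{d-s}$ of degree $d:=2m+t$, with $\kappa>0$, in which $c_s=\pi_{\mathbf{i}}(M)$ for the $p$-tuple $\mathbf{i}$ obtained by adjoining to the $p-2$ fixed values the pair $(n-s,\,s-m+k-t)$. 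A routine check of the underlying multisets gives $c_m=c_{m+t}=\pi_{\mathbf{k}^{[t]}}(M)$ and $c_{m-1}=c_{m+t+1}=\pi_{\mathbf{k}^{[t+1]}}(M)$; moreover $s\mapsto d-s$ merely transposes the last two entries of $\mathbf{i}$, so $c_s=c_{d-s}$ and $\widetilde F_t$ is palindromic.

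Finally, since $\widetilde F_t$ is Lorentzian, the Hessian condition in the definition of Lorentzian, applied to each degree-$(d-2)$ derivative $\partial_{z_{p-1}}^{s}\partial_{z_p}^{d-2-s}\widetilde F_t$ (a binary quadratic whose constant Hessian has at most one positive eigenvalue and nonnegative diagonal, hence nonpositive determinant), shows that the normalized coefficients $a_s:=c_s/\binom ds$ satisfy $a_s^2\ge a_{s-1}a_{s+1}$, while $M$-convexity of $\operatorname{supp}(\widetilde F_t)$ rules out internal zeros; thus $(a_s)$ is log-concave with no internal zeros, hence unimodal, and palindromy puts its maximum at $s=d/2=m+\tfrac t2$. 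Since $m-1<m\le d/2$ we get $a_{m-1}\le a_m$, that is
\[ \pi_{\mathbf{k}^{[t]}}(M)=c_m\ \ge\ \frac{\binom dm}{\binom d{m-1}}\,c_{m-1}=\frac{d-m+1}{m}\,\pi_{\mathbf{k}^{[t+1]}}(M)=\Bigl(1+\tfrac{t+1}{m}\Bigr)\pi_{\mathbf{k}^{[t+1]}}(M) \]
(the case $\pi_{\mathbf{k}^{[t+1]}}(M)=0$ being trivial). Multiplying over $t=0,1,\dots,p-2$ and recalling $m=(p-1)k$ yields
\[ \pi_{\mathbf{k}}(M)\ \ge\ \prod_{t=0}^{p-2}\Bigl(1+\tfrac{t+1}{(p-1)k}\Bigr)\pi_{\mathbf{\tilde{k}}}(M)=\prod_{j=1}^{p-1}\Bigl(1+\tfrac{j}{(p-1)k}\Bigr)\pi_{\mathbf{\tilde{k}}}(M), \]
which is Theorem~\ref{gaojie}. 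The hard part will be the middle paragraph: choosing the chain so that consecutive $p$-tuples overlap in exactly $p-2$ entries, and checking that after the prescribed differentiations the two coefficients compared are precisely $\pi_{\mathbf{k}^{[t]}}(M)$ and $\pi_{\mathbf{k}^{[t+1]}}(M)$, located symmetrically about the centre of $\widetilde F_t$; it is also in passing from the pairwise Hessian inequalities to full unimodality of $(a_s)$ that $M$-convexity of the support is needed.
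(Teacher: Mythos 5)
Your proposal is correct and follows essentially the same route as the paper's proof: you form the product $\prod_{j=1}^p G_M(\mathbf{x}^{(j)})$, apply the same ``sum of partials, then set to zero'' operators to extract $\sum\pi_{\mathbf{i}}z_1^{n-i_1}\cdots z_p^{n-i_p}$ as a Lorentzian polynomial, reduce to a bivariate Lorentzian polynomial by differentiating $m-1$ times in each variable carrying $k+1$ and $m$ times in each carrying $k$, and then use ultra log-concavity plus the palindromic symmetry to get $\pi_{\mathbf{k}^{[t]}}\ge\bigl(1+\tfrac{t+1}{(p-1)k}\bigr)\pi_{\mathbf{k}^{[t+1]}}$ and iterate, exactly as in the paper (your explicit derivation of bivariate ultra log-concavity from the Hessian condition, and your remark that $M$-convexity excludes internal zeros, are just slightly more detailed versions of steps the paper cites or asserts).
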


Given $p$ sets of mutually disjoint variables 

\[ \mathbf{x}_1=(x_1, x_{1,1}, \dots, x_{1,pk}), \dots,\mathbf{x}_p=(x_p,x_{p,1},\dots x_{p,pk}),\]

let
\[
 G_M(\mathbf{x}_j) = \sum_{I \in \mathcal{I}(M)}x_j^{pk-|I|} \prod_{i \in I} x_{j,i},  \]
and
\[G(\mathbf{x}_1,\dots,\mathbf{x}_p)=\prod_{j=1}^pG_M(\mathbf{x}_j).\]
We define a linear operator $\mathbf{H}_i$ on the polynomial ring $\mathbb{R}[x_1,\dots,x_p ]$, whose action on a polynomial $f\in \mathbb{R}[x_1,\dots,x_p ]$ is given by
\[\mathbf{H}_i(f) = \sum_{j=1}^p\frac{\partial f}{\partial x_{j,i}}\bigg|_{x_{j,i} = 0}.\]

Let $\mathbf{H} =\mathbf{H_1\cdots H_{pk}} $. We have the following result, whose proof is similar to that of Lemma \ref{lem-operator-s} and omitted here. 

\begin{lem}\label{duiyingp}
    For any matroid $M = (E, I)$ of size $n$, any positive integer $p\geq 2$ and any $\mathbf{i}=(i_1,i_2,\dots,i_p)\in \mathbb{N}_{>0}^p$ with $i_1+i_2+\cdots +i_p=n$, we have
    \begin{align*}
        \pi_{\mathbf{i}}(M) = [x_1^{n-i_1}x_2^{n-i_2}\cdots x_p^{n-i_p}]\mathbf{H}(G(\mathbf{x}_1,\dots,\mathbf{x}_p)),
    \end{align*}
    and moreover, if  $\mathbf{j}=(j_1,j_2,\dots,j_p)$ is any permutation of $\mathbf{i}$ then
    $\pi_{\mathbf{i}}(M) = \pi_{\mathbf{j}}(M).$
\end{lem}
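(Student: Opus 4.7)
The plan is to mimic the proof of Lemma \ref{lem-operator-s} almost verbatim, with the two-block setup $(\mathbf{x}, \mathbf{y})$ replaced by the $p$-block setup $(\mathbf{x}_1, \ldots, \mathbf{x}_p)$. First I would expand $G(\mathbf{x}_1, \ldots, \mathbf{x}_p) = \prod_{j=1}^p G_M(\mathbf{x}_j)$ as a sum indexed by $p$-tuples $(I_1, \ldots, I_p) \in \mathcal{I}^p$, with typical monomial
\[
g_{(I_1,\ldots,I_p)} \;=\; \prod_{j=1}^p\Bigl(x_j^{\,n-|I_j|}\prod_{i\in I_j}x_{j,i}\Bigr).
\]
Since $\mathbf{H}$ acts linearly, the entire identity reduces to analysing the action of $\mathbf{H} = \mathbf{H}_1 \cdots \mathbf{H}_n$ on a single such monomial.

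The key observation is that for each fixed $i \in [n]$, the variable $x_{j,i}$ occurs in $g_{(I_1,\ldots,I_p)}$ to degree $1$ if $i \in I_j$ and to degree $0$ otherwise. Interpreting $\mathbf{H}_i$ as ``sum the $p$ first-order partials and then set $x_{1,i} = \cdots = x_{p,i} = 0$'' --- the natural $p$-block analogue of the operator $\mathbf{S}_k$ from Lemma \ref{lem-operator-s} --- I would distinguish three cases exactly as in that earlier proof. If $i$ lies in no block $I_j$, every partial is already zero. If $i$ lies in two or more blocks, then in each surviving summand another factor $x_{j',i}$ remains after differentiation and is annihilated by the substitution. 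Finally, if $i$ lies in exactly one block $I_j$, the unique factor $x_{j,i}$ is stripped off and nothing else is zeroed out. Iterating this for $i = 1, 2, \ldots, n$ shows that $\mathbf{H}(g_{(I_1,\ldots,I_p)})$ is nonzero if and only if $(I_1, \ldots, I_p)$ forms an ordered independent set partition of $E$, in which case it equals $\prod_{j=1}^p x_j^{n-|I_j|}$.

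Collecting contributions, the coefficient of $\prod_{j=1}^p x_j^{n-i_j}$ in $\mathbf{H}(G(\mathbf{x}_1, \ldots, \mathbf{x}_p))$ is exactly the number of ordered independent $(i_1, \ldots, i_p)$-partitions of $E$, namely $\pi_{\mathbf{i}}(M)$, which establishes the displayed identity. The symmetry statement $\pi_{\mathbf{i}}(M) = \pi_{\mathbf{j}}(M)$ for any permutation $\mathbf{j}$ of $\mathbf{i}$ is then immediate, since relabelling the $p$ blocks of an ordered independent partition yields a bijection between the two families being counted; equivalently, one may invoke the manifest symmetry of $G(\mathbf{x}_1, \ldots, \mathbf{x}_p)$ under permutations of the $p$ groups of variables. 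I do not anticipate any serious obstacle: the entire argument is strictly parallel to the $p=2$ case already handled, and the only subtlety worth flagging is fixing the correct interpretation of $\mathbf{H}_i$, so that monomials in which an element $i$ appears in two or more of the $I_j$'s are actually annihilated rather than inadvertently preserved.
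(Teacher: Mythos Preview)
Your proposal is correct and is exactly the approach the paper intends: the paper explicitly states that the proof is similar to that of Lemma~\ref{lem-operator-s} and omits it, and your three-case analysis of the action of each $\mathbf{H}_i$ on a generic monomial $g_{(I_1,\ldots,I_p)}$ is the straightforward $p$-block generalisation of that argument. Your interpretation of the substitution in $\mathbf{H}_i$ as setting all of $x_{1,i},\ldots,x_{p,i}$ to zero (parallel to $x_i=y_i=0$ in $\mathbf{S}_i$) is the correct one, and with that reading nothing in the proof requires further care.
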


We proceed to prove Theorem \ref{gaojie}.

\begin{proof}[Proof of Theorem \ref{gaojie}]
Theorem~\ref{xiangcheng} tells that 
$G_M(\mathbf{x}_j)$ is a Lorentzian polynomial. By Corollary~\ref{product-preserve-L}, we see that $G(\mathbf{x}_1,\dots, \mathbf{x}_p)$ is also Lorentzian. By Theorem \ref{suanzi} and Corolloary \ref{piandao}, each operator $\mathbf{H}_i$ preserves the Lorentzian property. Hence the polynomial $\mathbf{H}(G(\mathbf{x}_1,\dots, \mathbf{x}_p))$ is a Lorentzian polynomial in $\mathbb{R}[x_1,x_2,\dots,x_p]$.

By Lemma \ref{duiyingp}, we get that
\[P_M(x_1,x_2,\dots,x_p) = \mathbf{H}(G(\mathbf{x}_1,\dots, \mathbf{x}_p)) = \sum_{{\mathbf{i}=(i_1,i_2,\dots,i_p)\in\mathbb{N}_{>0}^p}\atop{i_1+i_2+\cdots+i_p=pk}}\pi_{\mathbf{i}}x_1^{pk-i_1}x_2^{pk-i_2}\cdots x_p^{pk-i_p}.\]

For any $1\leq t\leq p-2$, it is routine to verify that
\begin{align*}
P^{(t)}=&\partial_1^{(p-1)k-1}\cdots \partial_t^{(p-1)k-1}\partial_{t+1}^{(p-1)k}\cdots\partial_{p-2}^{(p-1)k}\bigg|_{x_{1}=\cdots=x_{p-2}=0}P_M(x_1,\dots,x_p)\\
=&((pk-k-1)!)^t((pk-k)!)^{p-2-t}\sum_{i=0}^{2k}\pi_{\mathbf{k}(t,i)}x_{p-1}^{pk-i}x_p^{(p-2)k+t+i},
\end{align*}
where $\mathbf{k}(t,i)=(\underbrace{k+1,\dots,k+1}_{t's},\underbrace{k,\dots,k}_{(p-2-t)'s},i,2k-t-i)$.
It is clear that $P^{(t)}$ is Lorentzian. 
Since the sequence of coefficients of a bivariate Lorentzian polynomial is ultra log-concave, the sequence
$$\left\{\dfrac{\pi_{\mathbf{k}(t,i)}}{\binom{2(p-1)k+t}{(p-2)k+t+i}}\right\}_{i=0}^{2k}$$
is unimodal. Note that the mode of the sequence is $k-\left \lfloor t/2\right \rfloor $. Letting $i_1=k$ and $i_2=k+1$, we get that
$$\dfrac{\pi_{\mathbf{k}(t,i_1)}}{\binom{2(p-1)k+t}{(p-1)k+t}}\ge\dfrac{\pi_{\mathbf{k}(t,i_2)}}{\binom{2(p-1)k+t}{(p-1)k+t+1}}.$$
In view of Lemma \ref{duiyingp}, we obtain
$$\pi_{k+1,\dots,k+1,k,\dots,k,k-t}\ge\left(1+\dfrac{t+1}{(p-1)k}\right)\pi_{k+1,\dots,k+1,k+1,k,\dots,k,k-t-1}.$$
Iteration of the above inequality leads to 
$$\pi_{k,\dots,k}\ge\left(1+\dfrac1{(p-1)k}\right)\left(1+\dfrac2{(p-1)k}\right)\cdots\left(1+\dfrac{p-1}{(p-1)k}\right)\pi_{k+1,\dots,k+1,k-p+1},$$
as desired. 
\end{proof}

Combining Theorem \ref{wyx_2} and Theorem \ref{gaojie}, we immediately obtain Theorem \ref{strongp}.


\section*{Acknowledgements.}
We wish to thank Karim Adiprasito, Bishal Deb and Shouda Wang for helpful discussions.  
This work is supported by the Fundamental Research Funds for the Central Universities.

\end{document}